\newtheorem{theorem}{Theorem}
\newtheorem{conjecture}{Conjecture}
\newtheorem{lemma}{Lemma}
\newtheorem{corollary}{Corollary}
\newtheorem{proposition}{Proposition}
\theoremstyle{definition}
\newtheorem{definition}{Definition}
\newtheorem{remark}{Remark}
\begin{document}
	
	\title[Subgroups in weakly locally finite  division rings]{Multiplicative Subgroups in\\ weakly locally finite division rings}\thanks{This work is funded by Vietnam National Foundation for Science and Technology Development (NAFOSTED) under Grant No. 101.04-2019.323.}
	
	\author[Bui Xuan Hai]{Bui Xuan Hai $^{1,2}$}
	
	\author[Huynh Viet Khanh]{Huynh Viet Khanh$^{1, 2}$}
	\address[1]{Faculty of Mathematics and Computer Science, University of Science;}
	\address[2]{Vietnam National University, Ho Chi Minh City, Vietnam.}
	\email{ bxhai@hcmus.edu.vn; huynhvietkhanh@gmail.com}
	
	\keywords{division ring; locally solvable subgroup.\\
	\protect \indent 2020 {\it Mathematics Subject Classification.} 16K20, 20F19.}
	\maketitle
\begin{abstract} The description of the subgroup structure of a non-commutative division ring is the subject of the intensive study in the theory of division rings in particular, and of the theory of skew linear groups in general. This study is still so far to be complete. In this paper, we study this problem for weakly locally finite division rings. Such division rings constitute a large class which strictly contains the class of locally finite division rings. 

\end{abstract}

\section{Introduction}
In the study of the structure of non-commutative rings, division rings and their matrix rings can be considered as basic subjects. Among non-commutative rings, division rings constitute the simplest class. However, much less is known about their structure although they are ``perfect" subjects in which we can not only add, subtract, multiply, but also divide by non-zero elements. In the commutative case, the theory of fields  and their matrix rings is extensively developed branch of algebra with  very  nice results achieved even before the end of the last century. If every undergraduate student can easily give various examples of fields, then it is not the case with non-commutative division rings. The first  non-commutative division ring was discovered in 1843 by sir William Rowan Hamilton, an Irish mathematician when he studied Mechanics. This division ring is now called the division ring of real quaternions or also the Hamilton division ring. Ignore the rich application of Hamilton quaternions in Mechanics and Physics, the Hamilton discovery serves a very nice (and first) example of a non-commutative division ring. At present, infinitely many non-commutative division rings exist. Indeed, one can construct new non-commutative division rings by different manner from previous ones. Notice only, to possess these constructions, there are now several good books to read (e.g. see \cite{cohn}, \cite{lam}  etc.) In 1905, J. Wedderburn, a British mathematician, proved a famous theorem which is now referred to as the Wedderburn Little Theorem, stating that \textit{``Every finite division ring is commutative"}. This nice theorem serves as the inspiration for many further investigations to generalize it, and later, it was established even one direction of study in Ring Theory called \textit{``Commutativity theorems"} (e.g. see \cite[Chapter 3]{book_her}). Let $D$ be a division ring with center $F$ and the multiplicative group $D^*$. We may think about  the Wedderburn Little Theorem as the following: \textit{``If the multiplicative group $D^*$ of a division ring $D$ is finite, then $D=F$; that is, $D$ is a field".} With this viewpoint, we may think that the group properties of $D$ may influence to its algebraic properties. In the reality, many authors make the great attention to this side of the matter. For instance, in 1945, N. Jacobson \cite[Theorem 9]{1945_jacob} proved that if every element of $D^*$ is algebraic over a finite subfield of $F$, then $D$ is commutative. In 1951, I. Kaplansky \cite{1951_kap} showed that if the factor group $D^*/F^*$ is periodic, then $D=F$. In 1950, L. K. Hua \cite{1950_hua} proved that if $D^*$ is solvable, then $D$ is a field. Now, assume that $G$ is an arbitrary subnormal subgroup of  $D^*$. In 1964, Stuth \cite{1964_stuth} extended Hua's result by proving that if  $G$ is solvable, then $G$ is contained in $F$. Later, many authors study this phenomenon, and there is now a lot of published papers devoted to this direction of research (e.g. see \cite{2003_akbari}, \cite{chiba}, \cite{deo-bien-hai-20}, \cite{deo19}, \cite{2012_JAlgebra_dbh}, \cite{2013_hai-thin}, \cite{2009_hai-thin}, \cite{2010_hai-ha}, \cite{her_1978}, \cite{2019_khanh-hai}, and references therein).   

The aim of this paper is to study subgroup structure of weakly locally finite division rings. Such rings were firstly introduced in \cite{2012_JAlgebra_dbh} and some their basic properties were studied in \cite{deo19}, \cite{2012_JAlgebra_dbh}, \cite{2013_hai-ngoc}. By results from \cite{deo19}, we know that the class of weakly locally finite division rings strictly contains the class of locally finite division rings. For further study of the subject of this paper, in Section~2, we give the relationship among some classes of division rings. Among them, there is a class lying between the two classes we have mentioned above, namely, the class of   Stewart's division rings. Section 3 spends to the proof of a theorem which can be considered as the Tits Alternative for weakly locally finite division rings. This theorem will be used frequently in the next sections of the paper. Section 4 devotes to the study of the relationship between the class of Stewart's division rings and the class of weakly locally finite division rings. The main result in this section states that the problem on the existence of non-cyclic free subgroups in weakly locally finite division rings can be reduced to that in Stewart's division rings.  Finally, in Section 5, we study maximal subgroups in an almost subnormal subgroup of a weakly locally finite division ring. In this last section, the problem on the existence of non-cyclic free subgroups remains also attentive point. The new results we get here generalize the  results previously obtained in \cite{2019_hai-khanh}, \cite{2016_hai-tu},  \cite{2001_mah}.

Throughout this paper, all  symbols and notations we use are standard according to the literature in the area of study. In particular, if $S$ is a non-empty subset of a division ring $D$ and $E$ is a division subring of $D$, then $E[S]$ and $E(S)$ denote the subring and the division subring of $D$ generated by the set $E\cup S$ respectively.  If $R$ is an associative ring with identity, then the symbol $R^*$ stands for the group of units of $R$; that is, the multiplicative group consisting of all invertible elements of $R$. If $X$ is either a ring or a group, then $Z(X)$ denotes the center of $X$.
\section{Some classification of division rings}

The description of subgroup structure of skew linear groups in general depends essentially on base division rings. By definition, a division ring $D$ is \textit{centrally finite} if $D$ is a finite dimentional vector space over its center $F$; that is, $[D:F]<\infty$.  Centrally finite division rings constitute the simplest class of division rings because of the existence of the regular representation. Indeed, if  $[D:F]=n$, then it is well-known that via regular representation, the multiplicative group $D^*$ of $D$ can be viewed as a subgroup of the general linear group $\mathrm{GL}_n(F)$ over the field $F$. Consequently, every skew linear group of degree $m$ can be viewed as a subgroup of $\mathrm{GL}_{nm}(F)$; that is, it can be viewed as a linear group over a field. So, in this case, to study skew linear groups of degree $m$, one can use the methods and results from the theory of linear groups over fields. Of course, the case of division rings which are infinite dimensional over their centers is more complicated, and much less we know about their structures. Hence, it is more reasonable to classify division rings into classes from more simple classes into more complicated ones, and then study their structures in particular, and structures of skew linear groups over them in general. Let us recall some classical definitions. Let $D$ be a division ring with center $F$. If for any finite subset $S$ of $ D$, the division subring $F(S)$ of $D$ generated by the set $S\cup F$ is a finite dimensional vector space over $F$, then $D$ is called \textit{locally finite}. Clearly, a centrally finite division ring is locally finite. There exist infinitely many locally finite division rings that are not centrally finite (e.g. see \cite[$\S$ 13]{lam}). An element $a\in D$ is \textit{algebraic} over $F$ if $f(a)=0$ for some non-zero polynomial $f(t)\in F[t]$. A subset $S$ of $D$ is \textit{algebraic} over $F$ if $a$ is algebraic over $F$ for every $a\in S$. In particular, $D$ is \textit{algebraic} over $F$ (shortly, $D$ is \textit{algebraic}) if every element of $D$ is algebraic over $F$. Clearly, every locally finite division ring is algebraic. In 1941, Kurosh \cite[Problem K]{kurosh} asked whether every algebraic division ring is locally finite? At  present, this problem remains still unsolved in general: there are no examples of algebraic division rings that are not locally finite on one side, and on the other side, it is answered in the affirmative for all known up to now special cases of a division ring $D$ with the center $F$.  In particular, it is the case: for $F$ finite by Jacobson \cite{1945_jacob}; for $F$ having only finite algebraic field extensions (in particular for $F$ algebraically closed). The last case follows from the Levitzki-Shirshov theorem which states that {\em any algebraic algebra of bounded degree is locally finite} (e.g. see  \cite{khar}). Recently, in \cite{deo19}, \cite{2012_JAlgebra_dbh},  \cite{2013_hai-ngoc}, the authors have introduced the notion of weakly locally finite division rings and studied their basic properties. A division ring $D$ with center $F$ is  \textit{weakly locally finite} if for any finite subset $S$ of $D$, the division subring of $D$ generated by $S$ is centrally finite (observe that if $\mathbb{P}$ is the prime subfield of $D$, then $\mathbb{P}(S)$ is exactly the division subring of $D$ generated by $S$). It is easy to prove that any locally finite division ring satisfies the last property, so every locally finite division ring is weakly locally finite (the detail of the proof can be seen in \cite{2013_hai-ngoc}). In \cite{deo19}, the authors have constructed for any positive integer $n$ a division ring $D$ with the Gelfand-Kirillov dimension ${\rm GKdim} (D)=n$ which is not algebraic. It is well-known that the Gelfand-Kirillov dimension of a locally finite division ring is zero \cite{lenagan}. Hence, the examples from \cite{deo19} show that there exist infinitely many weakly locally finite division rings that are not locally finite. In 1986, in his Ph.D. thesis \cite{stewart}, A. I. Stewart introduced the class of division rings which is wider than the class of locally finite division rings. In the following, Stewart's original definition is given. 

\begin{definition}\label{definition_Stewar's}
	A division ring $D$ is called \textit{locally finite left dimensional over a subfield $E$} if for every finite subset $S$ of $D$, the division subring $E(S)$ of $D$ generated by the set $E\cup S$ is finite dimensional left vector space over $E$. The concept of \textit{locally finite right dimensional over a subfield $E$} is defined similarly.
\end{definition}
Keeping notations and symbols as in Definition \ref{definition_Stewar's}, it follows  by \cite[Lemma~ 2.1]{stewart_87} that $E(S)$ is a centrally finite division ring.  Moreover, both the left and right dimensions of $E(S)$ over $E$ are finite and  coincide; that is,   $$[E(S):E]_R=[E(S):E]_L<\infty.$$
Hence, a division ring $D$ is locally finite left dimensional over a subfield $E$ if and only if it is locally finite right dimensional over $E$. In view of this observation, if either of these conditions holds, then we say simply that $D$ is \textit{locally finite over a subfield $E$}. For short, we call such a division ring  a \textit{Stewart's division ring}. For the convenience, we summarize this fact in the following lemma.
\begin{lemma}[{\cite[Lemma 2.1]{stewart_87}}] \label{lem:2.2} Let $D$ be a division ring which is locally finite over a subfield $E$. Then, the following assertions hold:
	\begin{enumerate}
		\item For every finite subset $S$ of $D$, the division subring $E(S)$ of $D$ generated by the set $E\cup S$ is centrally finite.
		\item The left and right dimensions of $E(S)$ over $E$ are both finite and coincide; that is 
		$$[E(S):E]_R=[E(S):E]_L<\infty.$$ 
	\end{enumerate}
\end{lemma}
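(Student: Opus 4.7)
Set $K := E(S)$ and $n := [K : E]_{L}$, which is finite by hypothesis. The plan is to embed $K$ as a subring of a matrix algebra over $E$, deduce central finiteness from a standard polynomial identity, and finally equate the left and right dimensions of $K$ over $E$ by a tower-law computation.

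For the embedding, observe that for every $a \in K$ the right-multiplication map $R_a\colon K \to K,\ x \mapsto xa$ is left $E$-linear, since $R_a(ex) = exa = e\,R_a(x)$. Hence $a \mapsto R_a$ defines a ring homomorphism $K^{\mathrm{op}} \to \mathrm{End}_E({}_E K) \cong M_n(E)$, and this map is injective because $K^{\mathrm{op}}$ is simple. Thus $K$ is realised as a subring of $M_n(E)$. By the Amitsur--Levitzki theorem, $M_n(E)$ satisfies the standard polynomial identity $s_{2n}$ of degree $2n$; being a subring, $K$ inherits this identity. Since $K$ is a primitive PI division ring, Kaplansky's theorem gives $[K : F] \le n^{2}$, where $F := Z(K)$, which establishes assertion~(1).

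For assertion~(2), let $L$ be the subring of $K$ generated by $E \cup F$. Because $F$ is central in $K$ it commutes with $E$, so $L$ is commutative; as an $F$-subalgebra of the centrally finite $K$ it is finite-dimensional over $F$, and a commutative finite-dimensional integral domain over a field is itself a field, so $L$ is a subfield of $K$ containing both $E$ and $F$. The tower $F \subseteq L \subseteq K$, in which $F$ lies in the centre, yields
\[
[K : F] \;=\; [K : L]_{L}\,[L : F] \;=\; [K : L]_{R}\,[L : F],
\]
so $[K : L]_{L} = [K : L]_{R}$. The tower $E \subseteq L \subseteq K$, with $L$ a commutative field extension of $E$, gives $[K : E]_{L} = [K : L]_{L}\,[L : E]$ and $[K : E]_{R} = [K : L]_{R}\,[L : E]$. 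Combining the two equalities gives $[K : E]_{L} = [K : E]_{R}$, and both are finite.

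The main obstacle is the passage from the embedding to central finiteness. The embedding alone only shows that every element of $K$ is algebraic over $E$ of degree at most $n$; because $E$ need not lie in $Z(K)$, this does not directly bound $[K : Z(K)]$. Upgrading bounded algebraic degree over $E$ to finite dimensionality over the \emph{actual} centre really requires the Amitsur--Levitzki/Kaplansky package (or an equivalent double-centralizer computation inside $M_n(E)$). Once central finiteness is available, the tower-law reduction in the last step is routine.
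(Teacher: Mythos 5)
Your argument is correct. Note that the paper itself gives no proof of this lemma: it is imported verbatim from Stewart's paper (his Lemma~2.1), so there is nothing internal to compare against; your write-up is in fact a self-contained justification of a step the authors take on faith, and it follows the standard route one would expect Stewart to use. The right regular representation $a\mapsto R_a$ lands you in $\operatorname{End}_E({}_EK)\cong M_n(E)$ as an \emph{anti}-homomorphism, so what you literally embed is $K^{\mathrm{op}}$; this is harmless (compose with the transpose to get $K$ itself inside $M_n(E)$, or note that $s_{2n}$ is satisfied by a ring iff by its opposite and that $K$ and $K^{\mathrm{op}}$ have the same centre and the same dimension over it), but you should say one of these things rather than silently identifying $K$ with $K^{\mathrm{op}}$. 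The Amitsur--Levitzki/Kaplansky step then correctly yields $[K:Z(K)]\le n^2$, and your closing diagnosis is exactly right: bounded left algebraicity over a non-central $E$ does not by itself control $[K:Z(K)]$, so some form of PI theory (or a double-centralizer argument) is genuinely needed here. The second half is also fine: $L=F[E\cup Z(K)]$ is a commutative finite-dimensional domain over $Z(K)$, hence a field, $[L:E]\le n$ since $L$ is a left $E$-subspace of $K$, and the two tower laws through $L$ transfer the equality $[K:L]_L=[K:L]_R$ (which holds because both equal $[K:Z(K)]/[L:Z(K)]$) down to $[K:E]_L=[K:E]_R<\infty$.
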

\begin{remark}\label{rem:1} Let  $D$ be a division ring  which is locally finite over some subfield $E$. If $E$ coincide with the center $F$ of $D$, then it is evident that $D$ is locally finite. In general, $E$ may not equal to $F$. Sometimes, for the convenience, we use the expression \textit{``$D$ is a $E$-Stewart's division ring"}  to indicate the fact that $D$ is locally finite over a subfield $E$.
\end{remark}

From the definitions we see that every  locally finite division ring is a Stewart's division ring. The fact that the class of Stewart's division rings strictly contains the class of locally finite division rings can be seen from the following two results of Stewart.
\begin{theorem}[{\cite[Theorem A]{stewart_87}}]\label{th:2.2} Let $D$ be a locally finite division ring with center $F$, $n$ a positive integer, and $G$ a supersolvable subgroup of $\mathrm{GL}_n(D)$. Then, the following statements hold:
	\begin{enumerate}
		\item $G$ is locally nilpotent-by-periodic abelian, but not necessarily locally nilpotent-by-finite.
		\item  $G$ is nilpotent-by-locally finite.
		\item If either $u(G)=\langle 1\rangle$ or $F$ is perfect of characteristic $2$, then $G$ is hypercyclic.
	\end{enumerate}
\end{theorem}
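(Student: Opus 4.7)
The plan is to exploit the local finiteness of $D$ to reduce each assertion to a fact about supersolvable linear groups over a field, and then to assemble the local conclusions into global statements about $G$. Given any finitely generated subgroup $H\leq G$, the finitely many matrix entries of its generators lie in a centrally finite division subring $E\subseteq D$ of dimension $m:=[E:F]<\infty$. The regular representation of $E$ over $F$ embeds $\mathrm{GL}_n(E)$ into $\mathrm{GL}_{nm}(F)$, so $H$ appears as a supersolvable subgroup of a linear group over the field $F$, to which the classical structure theory of supersolvable linear groups applies.

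For part (1), I would invoke the classical theorem that the derived subgroup of any supersolvable group is nilpotent. Applied locally, this forces $G'$ to be locally nilpotent. The quotient $G/G'$ is abelian, and supersolvability combined with the linear embedding above forces the torsion-free part of each finitely generated piece of $G/G'$ to consist of commuting semisimple elements in $\mathrm{GL}_{nm}(F)$; a determinant/eigenvalue argument together with the algebraicity of $F(S)$ over $F$ in the locally finite setting then shows that $G/G'$ is periodic. For part (2), I would apply a Mal'cev-type theorem: every supersolvable subgroup of $\mathrm{GL}_k(F)$ has a nilpotent normal subgroup of index bounded by a function of $k$. Locally this produces canonical nilpotent normal subgroups $N_H\trianglelefteq H$ of bounded finite index, and their compatibility along inclusions yields a nilpotent normal subgroup $N\trianglelefteq G$ with $G/N$ locally finite. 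The counterexample in (1) ruling out ``nilpotent-by-finite'' should come from constructing an explicit supersolvable subgroup of some $D^{*}$ whose abelianization is infinite torsion.

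For part (3), the goal is to build an ascending normal series of $G$ with cyclic factors. When $u(G)=\langle 1\rangle$ there are no non-trivial unipotent elements, so locally $G$ is completely reducible; the supersolvable structure in the diagonalisable setting then yields cyclic chief factors directly. When instead $F$ is perfect of characteristic $2$, the surjectivity of the Frobenius on $F$ lets one extract square roots and split the unipotent part off, again reducing to the semisimple case. The main obstacle will be the passage from locally defined cyclic chief series to a single globally defined ascending normal series of $G$: one must show that the local choices of cyclic factors can be made compatible, which requires a careful inverse-limit or directed-colimit argument on the lattice of finitely generated subgroups of $G$.
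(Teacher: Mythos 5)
This statement is not proved in the paper at all: it is quoted from Stewart (\cite[Theorem A]{stewart_87}) purely as background, in order to argue that the class of Stewart's division rings strictly contains the class of locally finite ones. So there is no proof of the paper's own to compare yours against; what you have written would have to stand on its own as a reconstruction of Stewart's theorem, and as such it has genuine gaps.

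Your opening reduction is fine: embedding a finitely generated subgroup into $\mathrm{GL}_{nm}(F)$ via the regular representation of the centrally finite subring $F(S)$ is the standard first move. But the substance is missing at each subsequent step. In part (1) you propose to take the locally nilpotent normal subgroup to be $G'$ and to prove that $G/G'$ is periodic; this is already false for $G=\langle 2\rangle\le \mathbb{Q}^{*}=\mathrm{GL}_1(\mathbb{Q})$ (here $D=F=\mathbb{Q}$ is locally finite, $G$ is cyclic hence supersolvable, and $G/G'\cong\mathbb{Z}$ is torsion-free). The required normal subgroup is in general strictly larger than $G'$, and identifying it is the actual content of the statement; moreover the clause ``but not necessarily locally nilpotent-by-finite'' needs an explicit example, which you only promise. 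In part (2) the ``Mal'cev-type theorem'' you invoke --- a nilpotent normal subgroup of index bounded by a function of $k$ in any supersolvable subgroup of $\mathrm{GL}_k(F)$ --- is not a standard citable fact, and even granting a local version, gluing the subgroups $N_H$ into a single \emph{nilpotent} (not merely locally nilpotent) normal subgroup of $G$ requires a uniform bound on the nilpotency classes across all finitely generated $H$; you assert ``compatibility along inclusions'' without any argument, and this is exactly where the difficulty lies. In part (3) you yourself flag the passage from locally defined cyclic chief series to a global ascending normal series as ``the main obstacle'' to be handled by ``a careful inverse-limit or directed-colimit argument'': hypercyclicity is not a local property, so this deferred step is the whole proof, not a technicality. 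In short, the proposal is a plausible setup followed by placeholders where Stewart's arguments would have to go.
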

\begin{theorem}[{\cite[Theorem 1]{stewart_87_1}}]\label{th:2.3} Let $p$ be a prime number. Then, there exists a Stewart's division ring $D$ of characteristic $p$, and a locally nilpotent primary subgroup $G$ of $\mathrm{GL}_2(D)$ such that $G$ does not satisfy the normalizer condition and hence is not hypercentral.
\end{theorem}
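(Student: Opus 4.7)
Since this is an existence statement, the plan is to exhibit a concrete triple $(D,G,H)$ with the prescribed properties. I would split the work into three stages: building the division ring $D$, identifying the locally nilpotent primary subgroup $G\leq \mathrm{GL}_2(D)$, and locating a self-normalizing proper subgroup $H<G$.

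For the first stage, I would fix a prime $q\neq p$ and construct $E$ as an algebraic extension of $\mathbb{F}_p$ containing a full system of $q^n$-th roots of unity for all $n$, so that $E^*$ has a large $q$-primary torsion subgroup. Then I would take $D$ to be an iterated Ore extension (or a twisted Laurent series construction) of a purely transcendental extension of $E$ by an automorphism $\sigma$ of $q$-power order acting nontrivially. By Lemma \ref{lem:2.2}, to verify that $D$ is a Stewart's division ring over an appropriate subfield $E'\supseteq E$ it is enough to show that for every finite subset $S\subset D$ the division subring $E'(S)$ is finite-dimensional over $E'$ on one side, which is standard for Ore-type constructions once $\sigma$ acts with finite order on each generator. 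The point of this construction is to produce a skew field whose multiplicative group contains a rich supply of elements of $q$-power order that do not commute, something impossible in a locally finite (hence, by Jacobson, commutative-like) setting at the level of finitely generated subrings.

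For the second stage, I would let $G$ be the subgroup of upper triangular matrices of the form
$$\begin{pmatrix} \alpha & \beta \\ 0 & \gamma \end{pmatrix}$$
with $\alpha,\gamma$ ranging over a suitable $q$-subgroup $T\leq D^*$ consisting of roots of unity and $\beta$ ranging over a $T$-invariant additive $q$-subgroup $M\subseteq D$ (e.g.\ the $\mathbb{F}_q$-span of a well-chosen $T$-orbit). Any finitely generated subgroup of $G$ sits inside a finite-dimensional centrally finite block (thanks to the Stewart property), where one checks nilpotence by computing commutators: $G$ is metabelian, and each finitely generated subgroup becomes a $q$-group of bounded exponent, so $G$ is locally nilpotent and $q$-primary.

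For the third and hardest stage, I need a proper subgroup $H\leq G$ with $N_G(H)=H$. The natural candidate is a conjugate $H=uT_0u^{-1}$ of the diagonal torus $T_0=\{\mathrm{diag}(\alpha,\gamma):\alpha,\gamma\in T\}$ by a strategically chosen unipotent $u=\begin{pmatrix}1&b\\0&1\end{pmatrix}$ with $b\in D\setminus M$, and then intersect with $G$. A direct computation shows that an element $g\in G$ normalizes $H$ only if conjugation by $u^{-1}gu$ preserves $T_0$, which — because $D$ is genuinely non-commutative on the $q$-torsion — forces $g\in H$. The main obstacle is precisely this last step: all the flexibility built into the Ore extension at the first stage is there to guarantee that no unexpected normalizing elements appear, and the verification requires a careful case analysis of commutator relations in $T$ together with the action of $\sigma$ on $b$. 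Once $H\subsetneq G$ is self-normalizing, the normalizer condition fails, and hypercentrality is ruled out automatically.
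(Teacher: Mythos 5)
First, a point of comparison: the paper gives no proof of this statement at all --- it is quoted verbatim from Stewart \cite[Theorem 1]{stewart_87_1}, and the only use the paper makes of it is to deduce, via Theorem \ref{th:2.2}, that the division ring $D$ occurring in it cannot be locally finite. So your proposal has to stand on its own as a construction, and it does not.

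The fatal obstruction is the interaction between the characteristic $p$ and the torsion you are trying to exploit. By a classical theorem of Herstein (see, e.g., \cite[\S 13]{lam}), every torsion subgroup of the multiplicative group of a division ring of characteristic $p>0$ is abelian; each root of unity is algebraic over the prime field $\mathbb{F}_p$, and the division subring it generates together with any other is forced to be commutative. Hence the ``rich supply of elements of $q$-power order that do not commute'' that your first stage is designed to produce cannot exist in \emph{any} division ring of characteristic $p$, no matter how the Ore or twisted Laurent extension is rigged. Concretely, your group $G$ collapses in both cases. If $q\neq p$, then $(D,+)$ is an elementary abelian $p$-group, so the only additive $q$-subgroup $M$ is $\{0\}$ (and ``the $\mathbb{F}_q$-span'' of a subset of $D$ is not defined), while $T$ is abelian by Herstein's theorem; your $G$ is then an abelian diagonal group. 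If $q=p$, then $x^{p^n}=1$ gives $(x-1)^{p^n}=0$ and so $x=1$ in $D^*$, whence $T=\{1\}$ and $G$ reduces to the strictly unipotent upper triangular subgroup, which is again abelian. Either way $G$ satisfies the normalizer condition trivially, so the ``hardest step'' you defer --- exhibiting a proper self-normalizing $H$ --- is not merely unverified but impossible for any group of the triangular shape you propose. (Your candidate for $D$ is also inadequate for an independent reason: a twisted Laurent or Ore extension by a \emph{finite-order} automorphism is centrally finite, hence locally finite, and then Theorem \ref{th:2.2} rules out the existence of such a $G$ altogether.) Any correct example must be a $p$-primary group generated by unipotent elements not all lying in a single triangular subgroup, inside a division ring built as an infinite ascending union so that it is locally finite over an infinitely generated subfield $E$ that is not the center; this is what Stewart's cited construction actually supplies.
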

Since the subgroup $G$ of $\mathrm{GL}_2(D)$ in Theorem \ref{th:2.3}  is not hypercentral, it is not hypercyclic either. It follows from Theorem \ref{th:2.2} that $D$ is not locally finite. 

From the above discussion, we see that both classes of weakly locally finite division rings and Stewart's division rings strictly contains the class of locally finite division rings. It is  natural to ask what is the relationship between these two classes? By the following theorem, we show that the class of Stewart's division rings is contained in the class of weakly locally finite division rings. 

\begin{theorem}\label{theorem_2.2}
	Every Stewart's division ring is weakly locally finite.
\end{theorem}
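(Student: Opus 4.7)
The plan is to show that for any finite subset $S$ of a Stewart's division ring $D$, the division subring generated by $S$ is centrally finite, which is exactly what weak local finiteness demands. The natural route is through polynomial identities.

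First I would invoke the fact that the prime subfield $\mathbb{P}$ of $D$ is contained in every subfield of $D$; in particular, $\mathbb{P}\subseteq E$, where $E$ is the subfield over which $D$ is locally finite. Consequently, for any finite $S\subseteq D$, the division subring $\mathbb{P}(S)$ generated by $S$ (which by the parenthetical remark in Section~2 coincides with the division subring of $D$ generated by $S$) is contained in $E(S)$. By Lemma~\ref{lem:2.2}(1), the division ring $E(S)$ is centrally finite, say $[E(S):Z(E(S))]=n^{2}$ for some positive integer $n$.

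The crux is passing centrally-finiteness from $E(S)$ down to the subring $\mathbb{P}(S)$. The cleanest way is via polynomial identities: since $E(S)$ is centrally finite of PI-degree $n$, the Amitsur--Levitzki theorem ensures that $E(S)$ satisfies the standard identity $s_{2n}$. Any subring of a ring satisfying an identity satisfies the same identity, so $\mathbb{P}(S)$ also satisfies $s_{2n}$. Now $\mathbb{P}(S)$ is a division ring which is PI, and by Kaplansky's theorem a PI division ring is finite-dimensional over its center, with $[\mathbb{P}(S):Z(\mathbb{P}(S))]\leq n^{2}$. Hence $\mathbb{P}(S)$ is centrally finite, which is exactly the definition of weak local finiteness for $D$.

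The argument has no real obstacle beyond invoking the standard PI machinery; the only point one has to be slightly careful about is that the center of $\mathbb{P}(S)$ is in general not the field $\mathbb{P}$ itself (nor $E\cap\mathbb{P}(S)$), so one cannot bound $[\mathbb{P}(S):\mathbb{P}]$ directly from $[E(S):E]<\infty$. That is why one must use the intrinsic center of $\mathbb{P}(S)$ and appeal to Kaplansky's theorem, rather than a naïve dimension-counting argument inside $E(S)$.
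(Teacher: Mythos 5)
Your proof is correct and follows essentially the same route as the paper: since $\mathbb{P}\subseteq E$, one has $\mathbb{P}(S)\subseteq E(S)$ with $E(S)$ centrally finite by Lemma~\ref{lem:2.2}, and the whole matter reduces to passing central finiteness down to the division subring $\mathbb{P}(S)$. The paper handles that last step by citing \cite[Theorem 3]{2013_hai-ngoc} (any division subring of a centrally finite division ring is centrally finite), whereas you prove it inline via Amitsur--Levitzki and Kaplansky's PI theorem --- which is exactly the standard argument underlying the cited result, so the two proofs coincide in substance.
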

\begin{proof}
	Assume that $D$ is a Stewart's division ring with the prime subfield $\mathbb{P}$. Then, there exists a subfield $E$ of $D$ such that $D$ is locally finite  over $E$. To prove that $D$ is weakly locally finite, we must to show that $\mathbb{P}(S)$ is centrally finite for an arbitrary finite subset $S$ of $D$. It follows from  Lemma \ref{lem:2.2} that $E(S)$ is a centrally finite division ring which contains $\mathbb{P}(S)$. By the fact that any division subring of a centrally finite division ring is also centrally finite \cite[Theorem 3]{2013_hai-ngoc}, we conclude that $\mathbb{P}(S)$ is centrally finite, as desired. 
\end{proof}
The above theorem raises a significant question: whether its converse statement holds? In an attempt to find the answer to this question, we are feeling that an arbitrary weakly locally finite division ring is very far from being locally finite over some subfield. We shall discuss further in Section 4 some necessary conditions for a weakly locally finite division ring to be a Stewart's division ring. Meantime, this is a good moment to pose the following conjecture.

\begin{conjecture}\label{conj:2.1}
	The class of weakly locally finite division rings strictly contains the class of Stewart's division rings. 
\end{conjecture}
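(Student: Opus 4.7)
The plan is to establish the conjecture by exhibiting a specific weakly locally finite division ring which is not locally finite over any subfield. Since Theorem~\ref{theorem_2.2} gives one inclusion, only the strict containment remains; the natural candidates for a counterexample are the division rings $D$ constructed in \cite{deo19} with $\mathrm{GKdim}(D)=n$ for a positive integer $n$. These are weakly locally finite by construction, and they cannot be locally finite because, by \cite{lenagan}, every locally finite division ring has Gelfand--Kirillov dimension zero. What must be proved, therefore, is that no such $D$ is locally finite over any subfield $E\subseteq D$.

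The argument I would attempt runs by contradiction. Suppose there exists a subfield $E$ of $D$ over which $D$ is locally finite. First, using $\mathrm{GKdim}(D)=n\geq 1$, I would fix a finite set $T=\{a_1,\dots,a_k\}\subseteq D$ such that the subalgebra $\mathbb{P}[T]$ realises GK dimension $n$, where $\mathbb{P}$ denotes the prime subfield of $D$. Next, Lemma~\ref{lem:2.2} would give that $E(T)$ is centrally finite with $[E(T):E]_L=[E(T):E]_R<\infty$; in particular every element of $T$ would be algebraic over $E$ with degree bounded by $[E(T):E]$. I would then compare the two centrally finite division subrings $\mathbb{P}(T)$ and $E(T)$, both containing $T$: the former has center of transcendence degree at most $n$ over $\mathbb{P}$, while the latter is algebraic over $E$. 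The final step would be to extract a numerical contradiction from this comparison, ideally by leveraging the explicit skew-polynomial structure of the construction in \cite{deo19} to force $[E(T):E]=\infty$ for a suitably chosen $T$.

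The hardest part is clearly this last step. The subfield $E$ is almost unconstrained by the Stewart hypothesis: it need not contain the center $F$, it need not commute with any prescribed element of $D$, and it may have arbitrarily large transcendence degree over $\mathbb{P}$. Consequently, the bare condition ``$D$ is algebraic over $E$'' is weak in isolation, and a direct Gelfand--Kirillov comparison between $\mathbb{P}[T]$ and $E[T]$ seems insufficient without finer input. I expect that a successful proof will have to rely on the explicit Ore-type presentation of the examples in \cite{deo19}---using normal forms for elements as skew series---to exhibit, for any candidate subfield $E$, a finite set $T$ for which $E(T)$ is infinite dimensional over $E$. A cleaner, purely structural alternative would be to isolate a ring-theoretic invariant that is necessarily finite on every Stewart's division ring but demonstrably infinite on the examples of \cite{deo19}; identifying such a Stewart-degree analogue appears to be itself a nontrivial problem and is, I suspect, the real source of difficulty behind the conjecture.
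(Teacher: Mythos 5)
There is a genuine gap here, and it is worth being explicit about its nature: the statement you are trying to prove is posed in the paper as an open problem (Conjecture~\ref{conj:2.1}), not as a theorem. The authors prove only the non-strict inclusion (Theorem~\ref{theorem_2.2}) and explicitly leave the strictness unresolved, so there is no proof in the paper to compare against. Your proposal does not close the gap either, and you candidly say as much in your final sentence. The difficulty is exactly where you locate it: the Gelfand--Kirillov dimension argument from \cite{deo19} and \cite{lenagan} only rules out the case $E=F$ (i.e.\ local finiteness in the classical sense). It does not rule out local finiteness over some other subfield $E$, because $E$ is permitted to be enormous --- in particular it may have positive, even infinite, transcendence degree over the prime subfield, so the fact that $D$ is algebraic (indeed locally finite-dimensional) over $E$ imposes no bound on $\mathrm{GKdim}(D)$ computed over $\mathbb{P}$. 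Concretely, nothing in your outline excludes the possibility that one of the examples of \cite{deo19} is locally finite over a large maximal subfield of itself; positive GK dimension is simply not known to be an obstruction to being a Stewart's division ring.

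Your steps that do work are the easy ones: the examples of \cite{deo19} are weakly locally finite, they are not locally finite, and Lemma~\ref{lem:2.2} would indeed force every finite subset to generate a finite-dimensional $E$-subspace if a witnessing $E$ existed. But the ``numerical contradiction'' you hope to extract in the final step is not supplied, and your own closing remark --- that one would need to isolate an invariant finite on all Stewart's division rings but infinite on these examples --- is a restatement of the conjecture rather than a route to proving it. As written, the proposal is a plausible research programme, not a proof.
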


Before going on, we give the following chart to summarize the relationship among some classes of division rings. 

\begin{center}
	\unitlength=1mm
	\special{em:linewidth 0.4pt}
	\linethickness{0.4pt}
	\begin{picture}(120.00,20.00)(0,0)
	\put(0,5){\fbox{Centrally finite}}
	\put(30,6){\vector(1,0){12}}
	\put(33,7){\small{(1)}}
	\put(43,5){\fbox{Locally finite}}
	\put(71,8){\vector(3,1){12}}
	\put(71,4){\vector(3,-1){12}}
	\put(75,12){\small{(2)}}
	\put(76,3){\small{(3)}}
	\put(84,14){\fbox{Algebraic}}
	\put(84,-4){\fbox{Stewart's division rings}}
	\put(103,-10){\vector(0,-1){12}}
	\put(104,-15){\small{(4)}}
	\put(84,-26){\fbox{Weakly locally finite}}
	
	\end{picture}
\end{center}
\vspace*{3cm}

In this chart, the arrows mean the inclusions. By the discussion above, we see that the arrows (1) and (3) are not reversible, while the reversibility of the arrow (4)  is Conjecture \ref{conj:2.1}, and the question whether arrow (2) is reversible is exactly the Kurosh Problem for division rings.  
\section{An analogy of the Tits Alternative}

In 1972, J. Tits \cite{tits} proved a famous result on linear groups over fields, which is now often referred to as the Tits Alternative. For the convenience, we restate Tits' result in the following theorem.

\bigskip
\noindent
\textbf{Theorem A (the Tits Alternative)}. \textit{Let $F$ be a field, $n$ a positive integer and $\mathrm{GL}_n(F)$ the general linear group of degree $n$ over $F$.}
\textit{	\begin{enumerate}
		\item If ${\mathrm char}(F)=0$, then every subgroup of $\mathrm{GL}_n(F)$ either contains a non- cyclic free subgroup or is solvable-by-finite.
		\item If ${\mathrm char}(F)=p>0$, then every finitely generated subgroup of $\mathrm{GL}_n(F)$ either contains a non-cyclic free subgroup or is solvable-by-finite.
	\end{enumerate}}
	\bigskip
	In the theorem above, if we replace $F$ by a non-commutative division ring $D$ then in general, the conclusions fail to be true (e.g. see \cite{lichtman} for the case $n=1$; that is, for $D^*=\mathrm{GL}_1(D)$). The Tits Alternative served the starting point for many works focusing on finding the answer for the question whether the multiplicative group $D^*$ of a non-commutative division ring $D$ contains non-cyclic free subgroups. Although many efforts have been made, the question  remains still without the definitive answer in general case. Whereas, there is a lot of works giving an affirmative answer for this question in various particular cases. For instance, it was shown that the question has a positive answer in the following cases: $D$ is centrally finite \cite{goncalves_86}, $D$ is weakly locally finite \cite{2013_hai-ngoc}, \cite{ngoc_bien_hai_17}, and $D$ has uncountable center \cite{chiba}. These works also demonstrate that the most of these results  hold for any non-central  subnormal subgroup, instead of $D^*$.
	
	The aim of this section is to prove a theorem which is similar to Theorem A when a field $F$ is replaced by a weakly locally finite division ring $D$. The result obtained in Theorem \ref{theorem_2.8} will be used in the next sections to study the problem on the existence of non-cyclic free subgroups in weakly locally finite division rings, and, more generally, in  skew linear groups over them. The proof we develop  involves the technique of the Zariski topology and $CZ$-groups introduced in \cite[Chapter 5]{weh_1973}. 
	
	Let $F$ be a field. Then, by viewing as a vector space (of dimension $n^2$) over $F$, the matrix ring ${\mathrm M}_n(F)$ carries the Zariski topology. If $G$ is a subgroup of $\mathrm{GL}_n(F)$, then the induced topology on $G$ make it into a $CZ$-group. For any field extension $F\subseteq K$, the tensor product ${\mathrm M}_n(F)\otimes_F K\cong {\mathrm M}_n(K) $ is a finite-dimensional $K$-space. Therefore, it carries the Zariski topology as $K$-space and this induces a topology on ${\mathrm M}_n(F)\otimes_F 1$, and thus on ${\mathrm M}_n(F)$. It can be shown that these two topologies on ${\mathrm M}_n(F)$ are homeomorphic (\cite[p.73]{weh_1973}).
	
	Let $D$ be a weakly locally finite division ring with center $F$. By definition, for any finitely generated subgroup $Y$ of $D^*$, the division subring $D_Y=F(Y)$ of $D$ is centrally finite. Therefore, if we denote by $F_Y$  the center of $D_Y$, then the dimension $m_Y=[D_Y:F_Y]$ is finite, and the matrix ring  ${\mathrm M}_{m_Y}(F_Y)$ is a topological space with the Zariski topology. 
	
	If we view $Y$ as a subgroup of $\mathrm{GL}_{m_Y}(F_Y)$, then the Zariski topology on ${\mathrm M}_{m_Y}(F_Y)$ makes $Y$ into a $CZ$-group. It is clear that every subgroup $X$ of $Y$ is a $CZ$-group too. For such a subgroup $X$, denote by $X^{0}_Y$ the connected component of $X$ containing the identity $1$ with respect to the Zariski topology defined on ${\mathrm M}_{m_Y}(F_Y)$. For a subgroup $G$  of $D^*$, set $G^+:=\cup_Y(G\cap Y)^{0}_Y$, where $Y$ runs over the set of all finitely generated subgroups of $D^*$. 
	We shall prove that $G^+$ is a normal subgroup of $G$ and $G/G^+$ is locally finite. For this purpose, we need some lemmas.
	
	\begin{lemma}\label{lem:3.1} 
		Let $D$ be a division ring with center $F$. Assume that $S_1\subseteq S_2$ are two finite subsets of $D$. If $F_1$ and $F_2$ are the centers of $D_1=F(S_1)$ and $D_2=F(S_2)$ respectively, then there exists a subfield of $D$ containing both $F_1$ and $F_2$. 
	\end{lemma}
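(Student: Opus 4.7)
The plan is to first observe that although $F_1$ and $F_2$ are defined as centers of different division subrings, they do in fact commute elementwise inside $D$; once this is established, the desired subfield can be constructed as a field of fractions of the commutative subring they generate, realized inside $D$.

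First I would note that since $S_1\subseteq S_2$ we have $D_1=F(S_1)\subseteq F(S_2)=D_2$. By definition, $F_2=Z(D_2)$ commutes with every element of $D_2$, and in particular with every element of $D_1$, hence with every element of $F_1\subseteq D_1$. Therefore every element of $F_1$ commutes with every element of $F_2$. Let $K$ denote the subring of $D$ generated by $F_1\cup F_2$; since $F_1$ and $F_2$ are themselves commutative and commute elementwise with each other, $K$ is a commutative subring of $D$, and in particular a commutative integral domain.

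Next I would upgrade $K$ to a subfield of $D$ by forming its fraction field inside $D$. The key auxiliary observation is that if $a\in K\setminus\{0\}$ and $b\in K$, then from $ab=ba$ one deduces $a^{-1}b=ba^{-1}$ (multiply on both sides by $a^{-1}$), so that the inverse $a^{-1}$ taken in $D$ automatically commutes with all of $K$. In particular, for any nonzero $a,c\in K$ one gets $a^{-1}c^{-1}=(ca)^{-1}=(ac)^{-1}=c^{-1}a^{-1}$. I would then define
\[
L=\{ab^{-1}\mid a,b\in K,\ b\neq 0\}\subseteq D
\]
and verify routinely, using the commutativity just established, that $L$ is closed under sum, product, and inversion of nonzero elements, and hence is a (commutative) subfield of $D$. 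Since $K\subseteq L$, the field $L$ contains $F_1\cup F_2$, as required.

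There is no real obstacle here beyond being careful that $F_1$ and $F_2$ are not assumed to lie in a common commutative subring \emph{a priori}; the containment $D_1\subseteq D_2$ together with the defining property of $F_2$ as the center of $D_2$ is exactly what forces $F_1$ and $F_2$ to commute elementwise, after which the construction of $L$ is standard.
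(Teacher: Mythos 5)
Your proof is correct. The key observation is the same one the paper uses: since $D_1\subseteq D_2$, the center $F_2=Z(D_2)$ centralizes $D_1$ and hence commutes elementwise with $F_1\subseteq D_1$. Where you diverge is in how the containing subfield is produced. The paper gets it essentially for free by setting $D_3=C_{D_2}(D_1)$, which is a division subring of $D$ containing both $F_1$ and $F_2$, and then taking $F_3=Z(D_3)$: since every element of $D_3$ commutes with $F_1$ (as $F_1\subseteq D_1$) and with $F_2$ (as $F_2=Z(D_2)$), both fields land inside $Z(D_3)$, which is automatically a subfield of $D$. You instead build the field by hand, forming the commutative subring $K$ generated by $F_1\cup F_2$ and verifying that the set of quotients $ab^{-1}$ is a subfield of $D$; the verification you sketch (inverses of elements commuting with $K$ automatically commute with $K$, hence closure under sum, product, and inversion) is routine and goes through. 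Your route is more elementary and self-contained, at the cost of some bookkeeping; the centralizer trick in the paper compresses all of that into one line by exploiting the fact that the center of any division subring is already a field.
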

	
	\begin{proof} 
		Clearly, the inclusion $S_1\subseteq S_2$ implies $D_1\subseteq D_2$. If we set $$D_3=C_{D_2}(D_1)=\{x\in D_2~\vert~ xy=yx, \hbox{ for all } y\in D_1\},$$ and $F_3$ to be the center of $D_3$, then $F_3$ is a subfield of $D$ containing both $F_1$ and $F_2$. 
	\end{proof}
	
	\begin{lemma}\label{lemma_2.5}
		Let $D$ be a weakly locally finite division ring, and $G$ a subgroup of $D^*$. Suppose that $Y_1$ and $Y_2$ are finitely generated subgroups of $D^*$. If $x_1\in (G\cap Y_1)^0_{Y_1}$ and $x_2\in (G\cap Y_2)^0_{Y_2}$, then there exists a finitely generated subgroup $Y$ of $D^*$ such that $(G\cap Y)^0_{Y}$ contains both $x_1$ and $x_2$.
	\end{lemma}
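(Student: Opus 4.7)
I take $Y = \langle Y_1 \cup Y_2 \rangle$, a finitely generated subgroup of $D^*$ containing both $Y_1$ and $Y_2$; set $H = G \cap Y$ and $H_i = G \cap Y_i$, so that $H_i \subseteq H$. The same $Y$ will work for both elements, and it suffices to show that $(G \cap Y_i)^0_{Y_i} \subseteq (G \cap Y)^0_Y$ for $i = 1, 2$, whence $x_i \in (G \cap Y)^0_Y$.

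The overall strategy is topological: if the connected subset $(G \cap Y_i)^0_{Y_i}$ of $H_i$ (connected in the topology induced from $\mathrm{M}_{m_{Y_i}}(F_{Y_i})$) is also connected as a subset of $H$ in the topology induced from $\mathrm{M}_{m_Y}(F_Y)$, then, since it contains $1$, it must lie in the connected component of $1$ in $H$, namely $(G \cap Y)^0_Y$. What remains is to establish the right compatibility between these two Zariski topologies on $H_i$.

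The main obstacle is that the centers $F_{Y_i}$ and $F_Y$ are in general incomparable: elements of $F_{Y_i}$ commute with $D_{Y_i}$ but not necessarily with all of $D_Y$, so neither inclusion need hold a priori. To bridge this, I invoke Lemma \ref{lem:3.1}, applied with finite generating sets of $Y_i \subseteq Y$: this produces a subfield $L \subseteq D_Y$ containing both $F_{Y_i}$ and $F_Y$ and centralising $D_{Y_i}$. The subring $L \cdot D_{Y_i} \cong D_{Y_i} \otimes_{F_{Y_i}} L$ then sits inside $D_Y$ as a finite-dimensional $F_Y$-subspace (because $F_Y \subseteq L$) and is simultaneously the base change of the central simple $F_{Y_i}$-algebra $D_{Y_i}$ to $L$. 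Using the base-change invariance of the Zariski topology recorded in \cite[p.73]{weh_1973} together with the standard fact that a finite-dimensional subspace over a common field inherits its Zariski topology from any ambient space, I chain the inclusions $D_{Y_i} \hookrightarrow L \cdot D_{Y_i} \hookrightarrow D_Y$ and read off the desired comparison between the two topologies on $H_i$. This is the delicate step of the argument.

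With the compatibility in hand, $(G \cap Y_i)^0_{Y_i}$ is a connected subset of $H$ that contains both $1$ and $x_i$, so it lies in $(G \cap Y)^0_Y$. Taking the same $Y$ for $i = 1$ and $i = 2$ places both $x_1$ and $x_2$ in $(G \cap Y)^0_Y$, completing the proof.
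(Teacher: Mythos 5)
Your proposal is correct and follows essentially the same route as the paper: the same choice $Y=\langle Y_1,Y_2\rangle$, the same appeal to Lemma \ref{lem:3.1} for a common subfield containing both $F_{Y_i}$ and $F_Y$, and the same use of the base-change invariance of the Zariski topology from \cite[p.73]{weh_1973} to compare the two topologies on $G\cap Y_i$. Your tensor-product elaboration of the compatibility step, and the explicit remark that a connected subset containing $1$ must lie in the connected component of $1$, only spell out what the paper's proof leaves implicit.
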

	\begin{proof}
		
		Let us keep the symbols $D_Y, F_Y$ and $m_Y$ for any finitely generated subgroup $Y$ of $D^*$ as above. If we set $Y=\left\langle Y_1,Y_2\right\rangle \leq D^*$, then $Y$ is a $CZ$-group with the Zariski topology on ${\mathrm M}_{m_Y}(F_Y)$. Let $m=\max\{m_{Y_1},m_{Y}\}$. By Lemma \ref{lem:3.1}, there exists a subfield $K$ of $D_Y$ containing both $F_{Y_1}$ and $F_{Y}$. Hence, $Y_1\le Y$ may be viewed as subgroups of $\mathrm{GL}_{m}(K)$. As we have mentioned above, the topology on $Y$ (resp. $Y_1$), which is induced from the Zariski topology on ${\mathrm M}_m(K)$, coincides with the topology induced from that on  ${\mathrm M}_{ m_{Y}}(F_{Y})$ (resp. ${\mathrm M}_{m_{Y_1}}(F_{Y_1})$). Therefore, the condition $x_1\in (G\cap Y_1)_{Y_1}^0$ implies  $x_1\in (G\cap Y)_Y^0$. By a similar argument, we also have $x_2\in (G\cap Y)_Y^0$. 
	\end{proof}
	
	\begin{theorem}\label{th:3.3} $G^+$ is a normal subgroup of $G$ and $G/G^+$ is locally finite.
	\end{theorem}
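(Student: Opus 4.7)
The plan is to verify the two claims separately, exploiting the key property of a $CZ$-group: the connected component of the identity is a closed \emph{normal subgroup of finite index}, and in our matrix setting the group operations (translation, conjugation) are polynomial maps, hence homeomorphisms in the Zariski topology.

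For the subgroup property, I would take $x_1, x_2 \in G^+$, say $x_i \in (G\cap Y_i)^0_{Y_i}$, and directly invoke Lemma \ref{lemma_2.5} to find a single finitely generated $Y \le D^*$ with $x_1, x_2 \in (G\cap Y)^0_Y$. Since $(G\cap Y)^0_Y$ is a subgroup of $G\cap Y$ (being the identity component of a $CZ$-group), $x_1 x_2^{-1}$ lies in it, hence in $G^+$. For normality, pick $g \in G$ and $x \in (G\cap Y)^0_Y \subseteq G^+$; set $Y' = \langle Y, g\rangle$. Essentially the same argument used in the proof of Lemma \ref{lemma_2.5} (enlarging the ground field via Lemma \ref{lem:3.1} so that both $Y$ and $Y'$ embed in $\mathrm{GL}_m(K)$ for some common field $K$) shows that the Zariski topology on $Y$ is the subspace topology induced from $Y'$, so $x \in (G\cap Y')^0_{Y'}$. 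Now $g \in Y' \cap G$, and conjugation by $g$ is a polynomial (hence Zariski-continuous) self-homeomorphism of $Y'$ fixing the identity; therefore it preserves the identity component, so $gxg^{-1} \in (G\cap Y')^0_{Y'} \subseteq G^+$.

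For the local finiteness of $G/G^+$, I would take finitely many cosets $g_1 G^+,\dots, g_n G^+$ and let $Y = \langle g_1,\dots, g_n\rangle$, which is a finitely generated subgroup of $D^*$, hence embeds in $\mathrm{GL}_{m_Y}(F_Y)$ and carries the $CZ$-group structure. The standard $CZ$-group fact (see \cite[Chapter 5]{weh_1973}) is that $(G\cap Y)^0_Y$ has finite index in $G\cap Y$. Since $(G\cap Y)^0_Y \subseteq G^+$, the image of $\langle g_1,\dots, g_n\rangle$ in $G/G^+$ is a quotient of the finite group $(G\cap Y)/(G\cap Y)^0_Y$, hence finite. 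As every finitely generated subgroup of $G/G^+$ is finite, $G/G^+$ is locally finite.

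The main obstacle is the normality argument: I must be careful that when $g \notin Y$, enlarging to $Y' = \langle Y, g\rangle$ genuinely embeds $(G\cap Y)^0_Y$ into $(G\cap Y')^0_{Y'}$. This is precisely the content of the field-compatibility trick in Lemma \ref{lemma_2.5} (passing through a common subfield $K \supseteq F_Y, F_{Y'}$ so that the two Zariski topologies on $Y$ agree), combined with the fact that the inclusion $Y \hookrightarrow Y'$ is a Zariski-closed embedding; every other step is routine given the $CZ$-group formalism.
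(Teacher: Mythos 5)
Your proposal is correct and follows essentially the same route as the paper: Lemma \ref{lemma_2.5} for closure under products and inverses, enlargement to $\langle Y,g\rangle$ plus Zariski-continuity of conjugation for normality, and the finite index of the identity component of a $CZ$-group (applied to the finitely generated subgroup $\langle g_1,\dots,g_n\rangle\le G$, whose image in $G/G^+$ is a quotient of a finite group) for local finiteness. The only cosmetic difference is that you spell out why conjugation preserves the identity component, which the paper leaves implicit.
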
  
	\begin{proof} 
		It is clear that $G^+$ contains $1$. For arbitrary elements $x_1$ and $x_2$ of $G^+$, there exist finitely generated subgroups $Y_1$ and $Y_2$ of $D^*$ such that $x_1\in (G\cap Y_1)_{Y_1}^0$ and $x_2\in (G\cap Y_2)_{Y_2}^0$.  In view of Lemma \ref{lemma_2.5}, we can find a finitely generated subgroup $Y_0$ of $D^*$ such that $x_1, x_2 \in (G\cap Y_0)_{Y_0}^0$. Since $(G\cap Y_0)_{Y_0}^0$ is in fact a group, we have $x_1x_2\in (G\cap Y_0)_{Y_0}^0 \le G^+$, from which it follows that $G^+$ is closed under multiplication. A simple argument establishes that $G^+$ is also closed under taking of inverse elements. Hence, $G^+$ forms a subgroup of $G$. Now, assume that $g\in G$ and $x\in G^+$. By definition of $G^+$, there exists a finitely generated subgroup $Y$ of $D^*$ such that $x\in(G\cap Y)_Y^0$. Let $Y_g$ be the subgroup of $D^*$  generated by $Y\cup\{g\}$. According to Lemma \ref{lemma_2.5}, we deduce that $x\in (G\cap Y_g)_{Y_g}^0$, and so $g^{-1}xg\in (G\cap Y_g)_{Y_g}^0$. Because $(G\cap Y_g)_{Y_g}^0\le G^+$, we have $g^{-1}xg\in G^+$, proving the normality of $G^+$ in $G$.
		
		For the final stage of the proof, suppose that $\left\langle x_1G^+,x_2G^+,\dots,x_kG^+\right\rangle$ is a finitely generated subgroup of $G/G^+$. By setting $H=\left\langle x_1,x_2,\dots,x_k\right\rangle \le G$, we have $$\left\langle x_1G^+,x_2G^+,\dots,x_kG^+\right\rangle = HG^+/G^+\cong H/H\cap G^+.$$ 
		Since $H$ is finitely generated, from definitions we have $H^0_H=H^+$. Also, it follows from \cite[Lemma 5.2]{weh_1973} that $H^0_H$ is a normal subgroup of finite index of $H$. Hence,  $H/H^+$ is a finite group. Recall that by definition, we have  $G^+=\cup_Y(G\cap Y)^{0}_Y$, where $Y$ runs over the set of all finitely generated subgroups of $D^*$. Since $H$ is finitely generated subgroup of $G$, it follows that $G^+\ge (G\cap H)^0_H=H^0_H=H^+$. Consequently, $H^+\le G^+\cap H$ which implies that  $HG^+/G^+\cong H/G^+\cap H$ is finite. Hence, the subgroup  $\left\langle x_1G^+,x_2G^+,\dots,x_kG^+\right\rangle$ is finite, which shows that $G/G^+$ is locally finite. 
	\end{proof} 
	
	The following theorem can be considered as an analogous version of the Tits Alternative for weakly locally finite division rings.
	\begin{theorem} \label{theorem_2.8} 
		Let $D$ be a weakly locally finite division ring.  For a subgroup $G$ of $D^*$, the following statements are equivalent:
		\begin{enumerate}
			\item $G$ contains no non-cyclic free subgroups.
			\item $G$ is  locally solvable-by-finite.
			\item $G^+$ is abelian.
			\item $G$ is abelian-by-locally finite.
			\item $G$ is locally abelian-by-finite.
			\item Every finitely generated subgroup of $G$ satisfies a group identity. 
		\end{enumerate}
	\end{theorem}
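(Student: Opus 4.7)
The plan is to establish the cycle of implications $(1) \Rightarrow (2) \Rightarrow (3) \Rightarrow (4) \Rightarrow (5) \Rightarrow (6) \Rightarrow (1)$. The recurring device is that, by weak local finiteness, every finitely generated subgroup $H \leq D^*$ sits inside the centrally finite division subring $D_H = F(H)$ and hence, via the regular representation, embeds into the linear group $\mathrm{GL}_{m_H}(F_H)$ over the field $F_H = Z(D_H)$; this lets me import results from the theory of linear groups over fields and the $CZ$-group machinery of \cite[Chap.~5]{weh_1973}.

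For $(1) \Rightarrow (2)$, each finitely generated $H \leq G$ inherits the absence of non-cyclic free subgroups, and being finitely generated and linear over a field is solvable-by-finite by the Tits Alternative; hence $G$ is locally solvable-by-finite. For $(6) \Rightarrow (1)$, a non-cyclic free subgroup of $G$ is itself finitely generated, so by $(6)$ would satisfy a non-trivial group identity, contradicting the classical fact that free groups of rank $\geq 2$ satisfy none. Of the middle implications, $(3) \Rightarrow (4)$ is immediate from Theorem~\ref{th:3.3}; $(4) \Rightarrow (5)$ follows because if $A \trianglelefteq G$ is abelian with $G/A$ locally finite, then for any finitely generated $H \leq G$ the quotient $H/(H \cap A) \hookrightarrow G/A$ is finitely generated and locally finite, hence finite, so $H$ is abelian-by-finite; and $(5) \Rightarrow (6)$ is the observation that an abelian-by-finite group with abelian subgroup of index $n$ satisfies the law $[x^{n!}, y^{n!}] = 1$.

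The crux is $(2) \Rightarrow (3)$. By Lemma~\ref{lemma_2.5} it suffices to prove that for every finitely generated $Y \leq D^*$ the connected component $N^0 := (G \cap Y)^0_Y$ is abelian, so fix such a $Y$ and set $N = G \cap Y$ inside $D_Y^* \leq \mathrm{GL}_{m_Y}(F_Y)$. The hypothesis that $G$ contains no non-cyclic free subgroup passes to $N$, and by Tits this forces $N$ to be locally solvable-by-finite. Standard $CZ$-group arguments then show that $N^0$ is Zariski-connected and solvable, so its Zariski closure $\overline{N^0}$ in $\mathrm{M}_{m_Y}(F_Y)$ is a connected solvable linear algebraic subgroup of $\mathrm{GL}_{m_Y}(F_Y)$ entirely contained in $D_Y^*$.

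The decisive, genuinely skew-linear step is then to show that such a $\overline{N^0}$ is a torus. Using the standard structural decomposition $\overline{N^0} = T \ltimes U$ into a maximal torus and its unipotent radical, each $u \in U$ has the form $u = 1 + \nu$ with $\nu$ nilpotent in the matrix sense; since the regular representation is a ring homomorphism, the corresponding element of $D_Y$ satisfies the same nilpotency relation, so $\nu$ must be a nilpotent element of the division ring $D_Y$, forcing $\nu = 0$ and $u = 1$. Hence $U = \{1\}$, $\overline{N^0} = T$ is abelian, and consequently $N^0$ is abelian. The vanishing of the unipotent radical --- trivial once the division-ring hypothesis is brought to bear, but false for general linear subgroups --- is the main obstacle and the point at which the argument departs from the classical linear Tits Alternative.
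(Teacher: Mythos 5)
Your argument is correct and follows essentially the same route as the paper: the identical cycle of implications, the Tits Alternative applied inside the centrally finite subrings $F(S)$ for $(1)\Rightarrow(2)$, the $CZ$-group machinery together with Lemma~\ref{lemma_2.5} and Theorem~\ref{th:3.3} for the middle steps, and the observation that a division ring has no nontrivial unipotent elements as the decisive skew-linear input. The only cosmetic difference is at the end of $(2)\Rightarrow(3)$: the paper applies Lie--Kolchin over the algebraic closure to conclude that $(X^0_Y)'$ consists of unipotent elements of $D^*$ and is therefore trivial, which reaches the same conclusion as your $T\ltimes U$ decomposition while sidestepping the rationality questions (whether $\overline{N^0}=T\ltimes U$ holds over the possibly imperfect field $F_Y$, and whether the points of $U$ actually lie in $D_Y$) that your version would need to address.
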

	
	\begin{proof} Let $F$ be the center of $D$. 
		
		$1. \Rightarrow 2.$  Assume that $G$ contains no non-cyclic free subgroups. Let $H=\langle S\rangle $ be the subgroup of $G$ generated by a finite subset $S$ of $G$, and $K$  the division subring of $D$ generated by $S$. Then, $K$ is centrally finite. Setting $m=[K:Z(K)]$, where $Z(K)$ is the center of $K$, we may view $H$ as a finitely generated subgroup of $\mathrm{GL}_{m}(Z(K))$. By the Tits Alternative,  $H$ is solvable-by-finite, which yields that $G$ is locally solvable-by-finite.
		
		$2. \Rightarrow 3.$   Assume that $G$ is locally solvable-by-finite. For any finitely generated subgroup $Y$ of $D^*$,  set $X=G\cap Y$. We claim that  the connected component $X_Y^0$ of $X$ is an abelian group. Since $G$ is locally solvable-by-finite, so is $X$. Let $K$ be the division subring of $D$ generated by a finite set of generators  of $Y$. It follows from \cite[Lemma 10.12]{weh_1973} that $X$ is solvable-by-finite. In other words, there is a solvable normal subgroup $A$ of $X$ such that $X/A$ is finite. Now, for the subgroup $Y$, we use the corresponding symbols $D_Y, m_Y, F_Y$ as defined before. With respect to the Zariski topology defined on ${\mathrm M}_{m_Y}(D_Y)$, the subgroups $A\leq X\leq Y$ are $CZ$-groups. If we denote by $\bar{A}$ the closure of $A$ in $X$, then $\bar{A}$ is a solvable normal subgroup of $X$ (\cite[Lemma 5.1]{weh_1973}). According to \cite[Lemma 5.3]{weh_1973}, we conclude that $X_Y^0\subseteq \bar{A}$, from which it follows that $X_Y^0$ is also solvable. If $\bar{F}$ is the algebraic closure of $F_Y$, then ${\mathrm M}_{m_Y}(F_Y)\otimes_{F_Y}\bar{F}\cong {\mathrm M}_{m_Y}(\bar{F})$. Viewing $X_Y^0$ as a subgroup of $\mathrm{GL}_{m_Y}(\bar{F})$, we may apply the Lie-Kolchin Theorem (\cite[Lemma 5.8]{weh_1973}) to conclude that $X_Y^0$ is triangularizable, and so $(X_Y^0)'$ is unipotent. Since the only unipotent subgroup of $D^*$ is 1, we obtain that $(X_Y^0)'=1$, which says that $X_Y^0$ is abelian, as claimed. Finally, it is evident that $(G^+)'=\cup_Y(X_Y^0)'=1$ which implies that $G^+$ is  abelian, proving (3). 
		
		$3. \Rightarrow 4.$  This implication follows immediately by applying Theorem~ \ref{th:3.3}. 
		
		$4. \Rightarrow 5.$  Assume that $H$ is a finitely generated subgroup of $G$. Since $(4)$ holds, there exists an abelian normal subgroup $A$ of $G$ such that $G/A$ is locally finite. It follows that $H/H\cap A\cong HA/A$ is a finite group, which implies that $H$ is abelian-by-finite.
		
		$5. \Rightarrow 6.$ Assume that $(5)$ holds and $H$ is a finitely generated subgroup of $G$. Then, there is an abelian normal subgroup $A$ of $H$ such that $H/A$ is a finite group, say of order $m$. It follows that  $H$ satisfies the group identity $x^{-m}y^{-m}x^my^m=1$.
		
		Finally, the implication  $6. \Rightarrow 1.$  is obvious, and this completes the proof. 
	\end{proof}
	Theorem \ref{theorem_2.8} may have different applications, especially in the problem on the existence of  non-cyclic free subgroups in skew linear groups. As an example of such an application, we consider the following case. 
	
	For any subgroup $H$ of a group $G$, recall that the symbol $H^G$ denotes the normal closure of $H$ in $G$; that is, $H^G=\left\langle g^{-1}Hg:g\in G\right\rangle $. The next theorem describes the structure of almost subnormal subgroups of a subgroup in a weakly locally finite division ring. Recall that according to Hartley \cite{1989_Hartley}, a subgroup $H$ of a group $G$ is \textit{almost subnormal} in $G$ if there exists such a sequence of subgroups
	$$H=H_0\leq H_1\leq H_2\leq\dots \leq H_n=G$$
	that for any $0\leq i\leq n-1$, either $H_i$ is normal in $H_{i+1}$ or the index of $H_i$ in $H_{i+1}$ is finite. From definitions, we see that every subnormal subgroup of a group is almost subnormal. As it was shown in \cite{deo-bien-hai-20} and \cite{ngoc_bien_hai_17},  there exist infinitely many division rings whose multiplicative groups contain almost subnormal subgroups that are not subnormal. Moreover, in a weakly locally division ring, the problem on the existence of non-cyclic free subgroups in an almost subnormal subgroup of a group will be reduced to that of some its normal subgroup, as we see in the following theorem.
	
	\begin{theorem}\label{theorem_2.7}
		Let $D$ be a weakly locally finite division ring. Assume that $H\leq G$ are non-central subgroups of $D^*$; that is, they are not contained in the center of $D$. If $H$ is an almost subnormal subgroup of $G$, then $H$ contains a non-cyclic free subgroup if and only if so does $H^G$.
	\end{theorem}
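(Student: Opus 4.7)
The trivial direction is immediate: since $H \leq H^G$, any non-cyclic free subgroup of $H$ is automatically a non-cyclic free subgroup of $H^G$. For the other direction, I would argue the contrapositive: assuming $H$ contains no non-cyclic free subgroup, I will show the same for $H^G$. By Theorem~\ref{theorem_2.8}, the hypothesis on $H$ is equivalent to $H$ being locally solvable-by-finite; by the same theorem applied to the subgroup $H^G \leq D^*$, it suffices to show that $H^G$ is locally solvable-by-finite.

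Let $K$ be any finitely generated subgroup of $H^G$. Each generator of $K$ is a product of finitely many conjugates $g^{-1}hg$ with $g \in G$ and $h \in H$. Collecting the finitely many conjugators into $\{g_1,\ldots,g_t\} \subseteq G$ and the finitely many base elements into generators of a finitely generated subgroup $H_0 \leq H$, we obtain $K \leq M := \langle g_1^{-1}H_0 g_1,\ldots, g_t^{-1} H_0 g_t \rangle$. Each conjugate $g_i^{-1}H_0 g_i \cong H_0$ is solvable-by-finite, being a finitely generated subgroup of $H$. Because $M$ is finitely generated and $D$ is weakly locally finite, the division subring $D_M = F(M)$ of $D$ is centrally finite; writing $F_M = Z(D_M)$ and $d = [D_M:F_M]$, we may view $M$ as a subgroup of $\mathrm{GL}_d(F_M)$. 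By the Tits Alternative (Theorem~A), $M$ either is solvable-by-finite---so $K$ is too, completing the argument---or contains a non-cyclic free subgroup.

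The main obstacle is ruling out this second alternative, and this is where the almost subnormality of $H$ in $G$ must be exploited. I plan to induct on the length $n$ of an almost subnormal series $H = H_0 \leq H_1 \leq \cdots \leq H_n = G$. The base cases are direct: if $H \trianglelefteq G$ then $H^G = H$; and if $[G:H]<\infty$ then $[H^G:H]<\infty$, so the Nielsen--Schreier theorem preserves the non-cyclic free subgroup property in both directions. For the inductive step, I would use the identity $H^G = (H^{H_{n-1}})^G$: the induction hypothesis, applied to $H$ as an almost subnormal subgroup of $H_{n-1}$ via the shorter series $H \leq H_1 \leq \cdots \leq H_{n-1}$, shows that $H^{H_{n-1}}$ contains no non-cyclic free subgroup. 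Then $H^{H_{n-1}}$ is itself a non-central almost subnormal subgroup of $G$ via the length-$2$ series $H^{H_{n-1}} \trianglelefteq H_{n-1} \leq G$, whose last step matches the final step of the original series; a further application of the theorem (or, in the edge case $n=2$ with two normal steps, appeal to the subnormal analogues in \cite{2019_hai-khanh, 2013_hai-ngoc}) yields that $H^G = (H^{H_{n-1}})^G$ has no non-cyclic free subgroup.

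The delicate point throughout is that non-centrality must be preserved at every stage of the induction, which it is because $H^{H_{n-1}} \supseteq H$ is non-central. I expect the genuine technical difficulty to lie in the mixed base sub-case of $n = 2$, where the series combines a normal with a finite-index step, since there the inductive machinery does not strictly decrease the length and one must argue more carefully using a combination of the Nielsen--Schreier reduction and the length-one normal reduction.
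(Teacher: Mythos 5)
Your setup coincides with the paper's up to the point you identify as the main obstacle: both arguments reduce the claim, via Theorem~\ref{theorem_2.8}, to showing that $H^G$ is locally solvable-by-finite, and both take a finitely generated subgroup of $H^G$, enclose the finitely many conjugators and elements of $H$ involved in a finitely generated group, and use weak local finiteness to realize everything inside $\mathrm{GL}_n(Z(E))$ for a centrally finite division subring $E$. Where the paper and your proposal part ways is in how the second Tits alternative (a free subgroup in the join of the conjugates) is excluded. The paper does this with a single citation: by Wehrfritz \cite[point 3]{wehrfritz_93}, if $H_1$ is an almost subnormal subgroup of a linear group $G_1$ and $H_1$ is locally solvable-by-finite, then $H_1^{G_1}$ is solvable-by-finite. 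Applied to $H_1=H\cap G_1$ inside $G_1=\langle g_1,h_1,\dots,g_k,h_k\rangle$, this finishes the proof. That one result is the entire mathematical weight of the step you single out.

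Your replacement for it, induction on the length of the almost subnormal series, does not close. The inductive step always reduces a length-$n$ series to the length-$2$ series $H^{H_{n-1}}\trianglelefteq H_{n-1}\leq G$, whose first step is normal; hence every branch of the induction terminates in one of the cases (i) $N\trianglelefteq H_1\trianglelefteq G$ or (ii) $N\trianglelefteq H_1$ with $[G:H_1]<\infty$, and neither is proved. In case (i), $N^G$ is a join of (possibly infinitely many) normal, locally solvable-by-finite subgroups of $H_1$; that such a join is again locally solvable-by-finite is not formal --- joins of solvable-by-finite groups can be free --- and is exactly the kind of statement Wehrfritz's theorem provides. The references \cite{2019_hai-khanh} and \cite{2013_hai-ngoc} you invoke here concern maximal subgroups and free subgroups of subnormal subgroups, not this closure statement. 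In case (ii), which you yourself flag as ``the genuine technical difficulty,'' one has finitely many conjugates of $N$, each normal only in its own conjugate of $H_1$, and no argument is offered for why their join avoids non-cyclic free subgroups. So the proposal frames the problem correctly and handles the trivial reductions, but the one step that carries the proof is precisely the step left open; to repair it you should import \cite[point 3]{wehrfritz_93} (or prove an equivalent normal-closure statement for almost subnormal subgroups of linear groups) after the linearization.
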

	\begin{proof}
		The ``only if'' part is  obvious. For the ``if" part, in view of Theorem \ref{theorem_2.8}, it suffices to show that $H^G$ is locally solvable-by-finite. Assume that $A$ is a finitely generated subgroup of $H^G$; that is, $A=\left\langle g_1^{-1}h_1g_1,\dots,g_k^{-1}h_kg_k\right\rangle $ for some elements $g_1,\dots, g_k\in G$  and $h_1,\dots, h_k\in H$. Then, the division subring $E=\mathbb{P}(G_1)$ of $D$ generated by the subgroup $G_1=\left\langle g_1, h_1,\dots,g_k,h_k\right\rangle$ over the prime subfield $\mathbb{P}$ of $D$ is centrally finite, and we set $n=[E:Z(E)]$, where $Z(E)$ is the center of $E$. Since $H$ is an almost subnormal subgroup of $G$, it follows that  $H_1:=H\cap G_1$ is almost subnormal in $G_1$. If $H$ contains no non-cyclic free subgroups, then so does $H_1$. By Theorem \ref{theorem_2.8}, $H_1$  is locally solvable-by-finite. Viewing $H_1$ and $G_1$ as subgroups of $\mathrm{GL}_n(Z(E))$, we may apply \cite[point 3]{wehrfritz_93} to conclude that $H_1^{G_1}$ is solvable-by-finite. It is clear that $A\subseteq H_1^{G_1}$, hence $A$ is also solvable-by-finite, and this completes the proof. 
	\end{proof}
	Notice that in the above theorem, although $H$ is an almost subnormal subgroup of $G$, the subgroup $H^G$ is even normal. Therefore, Theorem \ref{theorem_2.7} has reduced the problem on the existence of non-cyclic free subgroups in an almost subnormal subgroup of a group to that of a normal subgroup. This observation would be useful in many circumstances. For a good demonstration, in the following corollary of Theorem~ \ref{theorem_2.7}, we give a short proof for one result previously  obtained in  \cite{ngoc_bien_hai_17}.
	\begin{corollary}[{\cite[Theorem 2.7]{ngoc_bien_hai_17}}]\label{corollary_2.6}
		Let $D$ be a weakly locally finite division ring, and $G$ an almost subnormal subgroup of $D^*$. If $G$ is non-central, then it contains a non-cyclic free subgroup.
	\end{corollary}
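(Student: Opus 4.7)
The plan is straightforward: apply Theorem~\ref{theorem_2.7} with $H=G$ and with the ambient group taken to be $D^*$ itself, and then reduce the almost subnormal case to the already-known normal case. First I would verify the hypotheses of Theorem~\ref{theorem_2.7}: the subgroup $G$ is almost subnormal in $D^*$ by assumption, and since $G\not\subseteq F$ while $G\subseteq D^*$, the ambient group $D^*$ is also non-central (equivalently, $D$ is non-commutative), so Theorem~\ref{theorem_2.7} does apply.

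The core of the argument is then a one-line reduction. Theorem~\ref{theorem_2.7} tells us that $G$ contains a non-cyclic free subgroup if and only if its normal closure $G^{D^*}$ does. Because $G\subseteq G^{D^*}$, this normal closure is itself non-central, and it is an honest normal, hence subnormal, subgroup of $D^*$. I would then invoke the existence of non-cyclic free subgroups in non-central subnormal subgroups of the multiplicative group of a weakly locally finite division ring, which has already been established in \cite{2013_hai-ngoc} (see also \cite{ngoc_bien_hai_17}). This produces a non-cyclic free subgroup inside $G^{D^*}$, and the reverse direction of Theorem~\ref{theorem_2.7} transfers it back to $G$.

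The only subtlety worth flagging is the verification that the cited input is logically weaker than Corollary~\ref{corollary_2.6}, so no circularity creeps in. The earlier result applies only to \emph{subnormal} non-central subgroups, whereas the corollary concerns the strictly larger class of \emph{almost} subnormal subgroups; the entire purpose of invoking Theorem~\ref{theorem_2.7} is precisely to bridge that gap by passing from $G$ to $G^{D^*}$. Hence I do not expect a serious obstacle: the main step is simply checking that $G^{D^*}$ lies in the scope of the earlier theorem, which is immediate from $G$ being non-central together with $G\leq G^{D^*}\trianglelefteq D^*$.
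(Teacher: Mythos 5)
Your proof is correct, and its first step coincides exactly with the paper's: both apply Theorem~\ref{theorem_2.7} with $H=G$ and ambient group $D^*$ to transfer the problem to the normal closure $N=G^{D^*}$, and your checks (non-centrality of $D^*$ follows from that of $G$; $N$ is normal and contains $G$, hence non-central) are sound. The divergence is in how the normal closure is then handled. You invoke the earlier result of \cite{2013_hai-ngoc} that a non-central subnormal subgroup of $D^*$ in a weakly locally finite division ring contains a non-cyclic free subgroup, and you are right that this is not circular, since that input concerns subnormal subgroups while the corollary concerns the strictly larger almost subnormal class. The paper instead re-derives the normal case from scratch: assuming $N$ has no non-cyclic free subgroup, Theorem~\ref{theorem_2.8} yields an abelian normal subgroup $A$ of $N$ with $N/A$ locally finite; Stuth's theorem forces $A\subseteq F$, the Cartan--Brauer--Hua theorem gives $F(N)=D$ and hence $A\le Z(N)$, so $N/Z(N)$ is locally finite, whence $N'$ is locally finite by \cite[Lemma 3]{2003_akbari}, Herstein's theorem on periodic subnormal subgroups places $N'$ in $F$, and Stuth's theorem applied to the now-solvable normal subgroup $N$ gives a contradiction. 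Your route is shorter and logically legitimate, but it somewhat defeats the stated purpose of the corollary, which is to give a \emph{new} short proof of the known result \cite[Theorem 2.7]{ngoc_bien_hai_17} as a demonstration of Theorems~\ref{theorem_2.7} and~\ref{theorem_2.8}; leaning on \cite{2013_hai-ngoc} for the normal case outsources precisely the part the paper wants to showcase. The one external fact you would need to verify is that \cite{2013_hai-ngoc} really does treat arbitrary subnormal subgroups rather than only $D^*$ itself, which the paper's introduction affirms.
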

	
	\begin{proof}
		Deny of the conclusion, assume that $G$ contains no non-cyclic free subgroups. In virtue of Theorem \ref{theorem_2.7}, $N:=G^{D^*}$  does not contain non-cyclic free subgroups. By Theorem \ref{theorem_2.8}, $N$ contains an abelian normal subgroup $A$ such that $N/A$ is locally finite. Since $A$ is an abelian subnormal subgroup of $D^*$, by Stuth's theorem \cite[Theorem 4]{1964_stuth}, $A$ is contained in the center $F$ of $D$. Because $N\not\subseteq F$, by applying Cartan-Brauer-Hua's theorem \cite[Theorem 2]{1949_hua} (see also  \cite[(13.17)]{lam}), we have $F(N)=D$, from which it follows $Z(N)=N\cap F^*$, and consequently,  $A\le Z(N)$. Since $N/A$ is locally finite, it follows that  $N/Z(N)$ is locally finite, and in view of \cite[Lemma 3]{2003_akbari},  $N'$ is locally finite too. Being a periodic subnormal subgroup of $D^*$, by \cite[Theorem~ 8]{her_1978}, the subgroup $N'$ is contained in $F$. Therefore, $N$ is a solvable normal subgroup of $D^*$, and it is contained in $F$ by Stuth's theorem, a contradiction. 
	\end{proof}
	
	\section{Weakly locally finite division rings and Stewart's division rings}
	
	As we have proved in Section 2,  every Stewart's division ring is weakly locally finite. As a further application of Theorem \ref{theorem_2.8}, we now show that the question on the existence of non-cyclic free subgroups in weakly locally finite division rings can be reduced to that in Stewart's division rings. At this point of view, the following theorem is useful. 
	
	\begin{theorem}\label{theorem_2.9}
		Let $D$ be a weakly locally finite division ring with center $F$, and $G$ a subgroup of $D^*$. If $G$ contains no non-cyclic free subgroups, then $F(G)$ is a Stewart's division ring.
	\end{theorem}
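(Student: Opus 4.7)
The plan is to let Theorem~\ref{theorem_2.8} do most of the work. Since $G$ contains no non-cyclic free subgroups, that theorem furnishes an abelian normal subgroup $A$ of $G$ (namely $A = G^+$) such that $G/A$ is locally finite. As every element of $A$ commutes with $F$, the division subring $L := F(A)$ of $D$ generated by $F \cup A$ is commutative, hence a subfield of $F(G)$. I will show that $F(G)$ is locally finite over this $L$; by Definition~\ref{definition_Stewar's}, this makes $F(G)$ a Stewart's division ring.

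For the required local finiteness, let $S$ be any finite subset of $F(G)$. Each element of $S$ is a rational expression in finitely many elements of $G$, so $S \subseteq F(H)$ for some finitely generated subgroup $H \leq G$, and therefore $L(S) \subseteq F(\langle A, H\rangle)$. Since $L(S)$ is a left $L$-subspace of $F(\langle A, H\rangle)$, it suffices to show that $F(\langle A, H\rangle)$ is finite-dimensional as a left $L$-vector space whenever $H \leq G$ is finitely generated.

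The technical core is the explicit construction of this division subring. Since $G/A$ is locally finite, $H/(H \cap A)$ is finite, of order $n$ say; pick coset representatives $1 = h_1, h_2, \dots, h_n$ of $H \cap A$ in $H$. Normality of $A$ in $G$ gives $h_i L h_i^{-1} = L$, hence $L h_i = h_i L$ for every $i$. Set
\[
V := L h_1 + L h_2 + \cdots + L h_n \subseteq D,
\]
a left $L$-subspace of dimension at most $n$. Using $h_i h_j \in h_k A$ for a suitable index $k$ together with $A \subseteq L$, a short computation yields $(L h_i)(L h_j) \subseteq L h_k$, so $V$ is a subring of $D$.

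The final step --- and in my view the only slightly delicate one, although it is a standard trick --- promotes $V$ from a subring to a subdivision ring of $D$. For any nonzero $x \in V$, right multiplication by $x$ is a left $L$-linear injection of the finite-dimensional left $L$-space $V$ into itself, hence bijective, so $x$ has a left inverse in $V$; a symmetric argument produces a right inverse. Therefore $V$ is a division subring of $D$ containing $F \cup A \cup H$ and contained in $F(\langle A, H\rangle)$, whence $V = F(\langle A, H\rangle)$ with $[V : L]_L \leq n$. This completes the reduction and shows that $F(G)$ is locally finite over $L$, so $F(G)$ is a Stewart's division ring.
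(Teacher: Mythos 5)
Your proposal is correct and follows essentially the same route as the paper: both invoke Theorem~\ref{theorem_2.8} to obtain an abelian normal subgroup $A$ with $G/A$ locally finite, pass to the subfield $F(A)$, and for each finitely generated piece span a finite-dimensional $F(A)$-subspace by coset representatives, which is a domain and hence a division ring. The only cosmetic difference is that the paper packages this as showing $K[G]=F(G)$ globally, while you verify the local finiteness condition of Definition~\ref{definition_Stewar's} directly on finite subsets.
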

	
	\begin{proof}
		Assume that a subgroup $G$ of $D^*$ contains no non-cyclic free subgroups. In view of Theorem \ref{theorem_2.8}, there exists an abelian normal subgroup $A$ of $G$ such that $G/A$ is locally finite. Then, the division subring $K=F(A)$ of $D$ is  a field, which is clearly normalized by $G$. Accordingly, if we set $\Delta=K[G]$, then any element $x\in\Delta$ can be written in the form $x=k_1g_1+\cdots+k_ng_n$ for some $k_1,\dots,k_n\in K$ and $g_1,\dots,g_n\in G$. If $H$ is the subgroup of $G$ generated by  $g_1, g_2, \cdots, g_n$, then the local finiteness of $G/A$ implies that $AH/A$ is a finite group. Let $\{y_1,\dots,y_m\}$ be a transversal of $A$ in $AH$, and let 
		$$R=Ky_1+\cdots+Ky_m.$$ 
		We note that $Ky_i=y_iK$ for all $i$, so $R$ is indeed a ring. Being contained in $D$, the ring $R$ is a domain, which is clearly finite dimensional over $K$. It follows that $R$ is a division ring (\cite[Exersice 13.2]{lam}), which is contained in $\Delta$, and so $x^{-1}\in R\subseteq \Delta$. Hence, $\Delta$ is a division ring, and consequently, $\Delta=F(G)$. Finally, because $G/A$ is locally finite, it follows that $\Delta$ is locally finite dimensional vector space over $K$. In other words, $\Delta$ is a $K$-Stewart's division ring. 
	\end{proof}	
	
	For the sake of a refinement, let us assume further that $G$ is algebraic over $F$. Under this condition, we might obtain a stronger result.
	
	\begin{corollary}\label{corollary_algebraic}
		Let $D$ be a weakly locally finite division ring with center $F$, and $G$ a subgroup of $D^*$. Assume that $G$ is algebraic over $F$. If $G$ contains no non-cyclic free subgroups, then $F(G)$ is a locally finite division ring.
	\end{corollary}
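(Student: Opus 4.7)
The plan is to refine the proof of Theorem \ref{theorem_2.9}, using algebraicity to upgrade the conclusion from ``Stewart's'' to ``locally finite''. From that proof we retain the abelian normal subgroup $A\leq G$ with $G/A$ locally finite, and the identity $F(G)=K[G]$ where $K=F(A)$. The new ingredient is that, since every element of $A$ is algebraic over $F$ and the elements of $A$ pairwise commute, $K=F(A)$ is an algebraic (commutative) field extension of $F$.

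The main step is to show, for every finite subset $S\subseteq F(G)$, that the division subring $F(S)$ is finite-dimensional over $F$. I would absorb $S$ into the $F$-algebra of a single finitely generated subgroup of $G$: write each $s\in S$ as a $K$-linear combination of elements of $G$, expand every $K$-coefficient using finitely many elements of $A$, and gather all these group elements into a finite set $T\subseteq G$. Put $H=\langle T\rangle$ and $B=H\cap A$. Local finiteness of $G/A$ forces $[H:B]<\infty$, so $B$ is finitely generated; as a finitely generated abelian subgroup of $A$, it consists of commuting algebraic elements over $F$, so $F(B)=F[B]$ is a finite field extension of $F$. Picking a transversal $y_1,\dots,y_m$ of $B$ in $H$, I claim $F[H]=\sum_{i=1}^{m}F(B)y_i$. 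The inclusion $\supseteq$ is obvious, and the reverse follows once one checks that $\sum F(B)y_i$ is closed under multiplication, which reduces to $y_i^{-1}By_i=B$; this is immediate from the fact that $H$ normalizes both $A$ and itself. Thus $[F[H]:F]\leq m[F(B):F]<\infty$, so $F[H]$ is a finite-dimensional $F$-subalgebra of the domain $D$, hence a division ring equal to $F(H)$. By construction $S\subseteq F[H]=F(H)$, so $[F(S):F]\leq [F(H):F]<\infty$.

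To conclude, set $F'=Z(F(G))$, so $F\subseteq F'$. For every finite $S\subseteq F(G)$, the product $F'\cdot F(S)$ is a finite-dimensional $F'$-subalgebra of the domain $F(G)$; being a finite-dimensional domain over $F'$ it is a division ring, and it clearly coincides with $F'(S)$. Therefore $[F'(S):F']<\infty$, which is exactly the definition of $F(G)$ being locally finite. The only genuine technical point in the whole plan is the verification that $\sum F(B)y_i$ is multiplicatively closed, which hinges on the $H$-invariance of $B=H\cap A$ inherited from the setup of Theorem \ref{theorem_2.9}; once that is in hand, the rest is linear-algebraic bookkeeping.
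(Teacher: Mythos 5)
Your argument is correct, but it takes a genuinely different route from the paper's. The paper first proves that every element $x$ of $F(G)$ is algebraic over $F$: it uses the Stewart property established in Theorem \ref{theorem_2.9} to obtain a left minimal polynomial of $x$ over $K=F(A)$, pushes its finitely many coefficients into a finite extension $E$ of $F$ normalized by $x$, deduces $[E[x]:F]<\infty$, and then invokes \cite[Theorem~5]{deo19} (a weakly locally finite division ring that is algebraic over its center is locally finite) to conclude. You bypass both steps: for each finite $S\subseteq F(G)$ you produce an explicit finite-dimensional $F$-subalgebra $F[H]=\sum_{i}F(B)y_i$ containing $S$, where $H\le G$ is finitely generated and $B=H\cap A$. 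All the verifications you flag do go through: $H/B$ embeds in the locally finite group $G/A$, so $[H:B]<\infty$ and $B$ is finitely generated by Schreier, whence $F[B]=F(B)$ is a finite extension of $F$; and $H$ normalizes $B=H\cap A$ because $A\unlhd G$, so each $y_i$ normalizes $F(B)$ and $\sum F(B)y_i$ is indeed a ring, hence a division ring by finite dimensionality over the central field $F$. What your route buys is that it is self-contained (no appeal to \cite{deo19}), it yields the formally stronger conclusion $[F(S):F]<\infty$ for every finite $S$ (local finiteness over $F$ itself, not merely over $Z(F(G))$, with your last paragraph correctly converting one into the other), and it only ever conjugates by genuine elements of $G$, which visibly normalize $A$ --- whereas the paper's element-wise argument conjugates the coefficients $a_i\in K$ by an arbitrary $x\in F(G)$ and asserts that such an $x$ normalizes $K$, a step your decomposition never needs. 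What the paper's route buys is brevity and the intermediate statement that $F(G)$ is algebraic over $F$, which is of independent interest.
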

	
	\begin{proof}
		Let us keep all symbols and notations as in the proof of Theorem \ref{theorem_2.9}. Our hypothesis asserts that the subfield $K=F(A)$ is an algebraic extension of $F$. We claim that $F(G)$ is algebraic over $F$. Indeed,  take a nonzero element $x\in F(G)$. The previous theorem says that $F(G)$ is a $K$-Stewart's division ring. In view of Lemma~ \ref{lem:2.2},   $[K(x):K]_L<\infty$, from which it follows that $x$ is left algebraic over $K$. Let $p(t)=t^n+a_{n-1}t^{n-1}+\cdots+a_1t+a_0\in K[t]$ is the left minimal polynomial of $x$. Then, we have
		$$x^n+a_{n-1}x^{n-1}+\cdots+a_1x+a_0=0.\eqno(1)$$
		Since $x$ normalizes $K$, for each $1\leq i\leq n$, we can find an element $b_i\in K$ such that $a_ix=xb_i$. The condition that $G$ is algebraic over $F$ implies in particular that the field $K$ is algebraic over $F$. Hence, $E=F(a_1, b_1,\dots,a_n,b_n)$  is a finite field extension over $F$. Clearly, $E$ is normalized by $x$. This implies that the elements $1, x, x^2, \dots, x^i, \dots$ generate $E[x]$ as a left $E$-vector space. From (1), we have
		$$x^n=-a_{n-1}x^{n-1}-\dots-a_1x-a_0,$$
		and it follows that for each $i\ge n, x^i$ can be expressed as a left linear combination over $E$ of elements $1, x, x^2, \dots, x^{n-1}$. As a result, we conclude that $\{1, x, x^2, \dots, x^{n-1}\}$ is a basis of $E[x]$ over $E$. The finiteness of $[E:F]$ and $[E[x]:E]_L$ implies that $[E[x]:F]<\infty$, from which it follows that $x$ is algebraic over $F$. Hence, $F(G)$ is algebraic over $F$, as claimed. Finally, since $F(G)$ is both weakly locally finite and algebraic over $F$, we may apply \cite[Theorem~5]{deo19} to conclude that it is a locally finite division ring. 
	\end{proof}
	
	Before closing this section, we record some additional results regarding locally solvable subgroups in a weakly locally division ring.
	
	\begin{proposition}\label{proposition_4.1}
		Every locally nilpotent subgroup in a weakly locally finite division ring is center-by-locally finite.
	\end{proposition}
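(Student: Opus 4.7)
The plan is to combine Theorems~\ref{theorem_2.8} and~\ref{th:3.3} with a characteristic-zero unipotence argument on $G^{+}$, reducing the statement to a uniform torsion bound controlled by the centrally-finite structure of the relevant division subrings of $D$.

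First, since $G$ is locally nilpotent it contains no non-cyclic free subgroup, so Theorem~\ref{theorem_2.8} combined with Theorem~\ref{th:3.3} yields that $A:=G^{+}$ is abelian and normal in $G$ with $G/A$ locally finite. Setting $C:=C_G(A)\supseteq A$, the quotient $G/C$ is a quotient of $G/A$, hence locally finite, so every conjugation automorphism $\sigma_g$ of $A$ (for $g\in G$) has finite order in $\mathrm{Aut}(A)$. It suffices to prove $[H:H\cap Z(G)]<\infty$ for every finitely generated $H\le G$: if $A_H:=H\cap A$, then $H/A_H\hookrightarrow G/A$ is a finitely generated subgroup of a locally finite group, hence finite, so $A_H$ is a finitely generated abelian subgroup of finite index in $H$, and the task reduces to verifying $[A_H:A_H\cap Z(G)]<\infty$.

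Next, fix $g\in G$. Since $\langle H,g\rangle$ is finitely generated and nilpotent, its abelian normal subgroup $B_g:=\langle H,g\rangle\cap A$ is finitely generated and contains $A_H$. Translating the iterated commutator identity $[b,g,g,\ldots,g]=1$ (with $c$ copies of $g$, $c$ being the nilpotency class of $\langle H,g\rangle$) into additive form on the abelian group $B_g$ yields $(\sigma_g-1)^c\cdot b=0$ for every $b\in B_g$, so $\sigma_g|_{B_g}$ is unipotent. Combined with the finite order of $\sigma_g$ on $A$, the induced operator on $B_g\otimes\mathbb{Q}$ is unipotent and of finite order, hence the identity (a standard fact in characteristic zero). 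Therefore $\sigma_g(a)\cdot a^{-1}\in T(B_g)$ for every $a\in A_H$, where $T(B_g)$ is the (finite) torsion subgroup of $B_g$. The map $\tau_g:A_H\to T(B_g)$, $a\mapsto\sigma_g(a)a^{-1}$, is a homomorphism with kernel $C_{A_H}(g)$ satisfying $[A_H:C_{A_H}(g)]\le|T(B_g)|$.

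The hard part will be obtaining a uniform bound on $|T(B_g)|$ as $g$ ranges over $G$, exploiting that each $B_g$ lies in the commutative field $K:=F(A)\subseteq D$ and that torsion in $K^{*}$ is constrained, inside the centrally finite division subrings produced by weak local finiteness of $D$, by the dimensions $[F(\langle H,g\rangle):Z(F(\langle H,g\rangle))]$ together with the fact that the relevant roots of unity must already appear in a centrally finite subring determined by the finitely generated $H$. Granting a uniform bound $N$, the family $\{C_{A_H}(g):g\in G\}$ consists of subgroups of the finitely generated abelian group $A_H$ of index at most $N$; since $A_H$ possesses only finitely many subgroups of any given finite index, this family is finite. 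Therefore $A_H\cap Z(G)=\bigcap_{g\in G}C_{A_H}(g)$ is an intersection of finitely many finite-index subgroups of $A_H$, hence itself of finite index in $A_H$, completing the reduction and the proof.
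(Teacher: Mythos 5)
Your reduction to showing $[A_H:A_H\cap Z(G)]<\infty$ for finitely generated $H$, and the observation that each $\sigma_g$ is unipotent on $B_g$ (via iterated commutators) and of finite order on $A$, hence trivial on $B_g\otimes\mathbb{Q}$, giving $[A_H:C_{A_H}(g)]\le|T(B_g)|$ --- all of this is correct as far as it goes. But the argument stops exactly at its crux: you write ``Granting a uniform bound $N$'' on $|T(B_g)|$ as $g$ ranges over $G$, and never establish one. This is not a deferrable technicality. Without uniformity the conclusion fails: an intersection of finite-index subgroups of unbounded index can have infinite index (compare $\bigcap_{n}n\mathbb{Z}=\{0\}$ inside $\mathbb{Z}$), so knowing that each $C_{A_H}(g)$ has finite index says nothing about $A_H\cap Z(G)=\bigcap_{g\in G}C_{A_H}(g)$. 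Moreover, the sources you gesture at do not supply the bound: the group of roots of unity in the field $K=F(A)$ can be infinite, $T(B_g)$ genuinely depends on $g$, and weak local finiteness gives \emph{no} uniform bound on the degrees $[F(\langle H,g\rangle):Z(F(\langle H,g\rangle))]$ as $g$ varies --- each is finite, but the family is typically unbounded. Nor does $A_H$ itself help: a finite cyclic quotient of a finitely generated abelian group of positive rank can have arbitrarily large order.

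The paper sidesteps this entirely by importing the needed finiteness from Wehrfritz. For a finitely generated $Y\le G$, $Y$ lies in the unit group of the finitely generated subring $R$ of the centrally finite division ring generated by the generators of $Y$; hence $Y$ is nilpotent by \cite[Theorem 4.23]{weh_1973} and, crucially, $Y/Z(Y)$ is \emph{finite} by \cite[Theorem 3.13]{weh_1973}. Thus $Y$ is an $FC$-group, so its Zariski connected component satisfies $Y^0\subseteq Z(Y)$ by \cite[Lemma 5.5]{weh_1973}; enlarging $Y$ by an arbitrary $g\in G$ and invoking Lemma~\ref{lemma_2.5} shows $G^+\subseteq Z(G)$, and Theorem~\ref{th:3.3} then gives that $G/Z(G)$ is locally finite. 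Note that the paper does not route through Theorem~\ref{theorem_2.8} at all here. If you wish to complete your own approach, the missing uniform bound is of essentially the same nature as the Wehrfritz finiteness of $Y/Z(Y)$ for finitely generated nilpotent subgroups --- that is, you would end up having to prove the very fact your construction was designed to avoid, so you should either cite it or reproduce its proof.
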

	
	\begin{proof}
		Assume that $D$ is a weakly locally fintie division ring, and $G$ is an arbitrary locally nilpotent subgroup of $D^*$. For any finitely generated subgroup $Y$ of $G$, the subgroup $X=G\cap Y$ is $CZ$-group with the Zariski topology on ${\mathrm M}_{m_Y}(F_Y)$. Let $R$ be the subring of $D$ generated by all (finitely many) generators and their inverses of $Y$ over the prime subfield of $D$. Then, we may view $X$ as a subgroup of $R^*$. It follows that $X$ is nilpotent (\cite[Theorem 4.23]{weh_1973}), and $X/Z(X)$ is finite (\cite[Theorem 3.13]{weh_1973}). For any $x\in X$, we have $|x^X|=[X:C_X(x)]<\infty$, which implies that $X$ is an $FC$-group, so $X^0\subseteq Z(X)$ (\cite[Lemma 5.5]{weh_1973}). Now, we have 
		$$G^+=\cup_Y(G\cap Y)^0=\cup_YX^0\subseteq\cup_YZ(X)=\cup_Y(Z(G)\cap X) \subseteq Z(G).$$
		In view of Theorem \ref{th:3.3}, we conclude that $G$ is center-by-locally finite. 
	\end{proof}
	
	In view of \cite[1.4.13]{shi-weh_1986}, we see that a locally solvable subgroup of a locally finite division ring need not be solvable. However, any locally nilpotent or locally supersolvable subgroups of a weakly locally finite division ring are always solvable.
	
	\begin{corollary}\label{corollary_4.2}
		Every locally nilpotent subgroup in a weakly locally finite division ring is metabelian. 
	\end{corollary}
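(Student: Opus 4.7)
The proof proceeds by reducing to finitely generated subgroups and invoking the structural theory of nilpotent subgroups of the multiplicative group of a division ring.

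Since metabelianness is a local property (the identity $[[a,b],[c,d]]=1$ quantifies over only four elements), it suffices to show that every finitely generated subgroup $H$ of $G$ is metabelian. Fix such an $H$. Local nilpotency of $G$ makes $H$ nilpotent, and since $D$ is weakly locally finite, $H$ lies in the multiplicative group of a centrally finite division subring $K=\mathbb{P}(H)$ of $D$. Applying Proposition~\ref{proposition_4.1} to $H$ itself (which is itself locally nilpotent) yields that $H/Z(H)$ is locally finite; being finitely generated, this quotient is actually finite, and Schur's theorem then shows that $H'$ is a finite subgroup of $K^{*}$.

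It remains to establish that $H'$ is abelian. For this I would invoke Amitsur's classification of finite subgroups of the multiplicative group of a division ring: a finite nilpotent subgroup of $K^{*}$ decomposes as a direct product of cyclic Sylow $p$-subgroups together with at most a single generalized quaternion Sylow~$2$-subgroup. The generalized quaternion possibility must be excluded for $H'$ itself: since $H$ is nilpotent and $H'$ is normal in $H$, the conjugation action of $H$ on $H'$ inside $D^{*}$ is sufficiently restricted that a generalized quaternion subgroup cannot appear as the full Sylow~$2$-subgroup of the derived subgroup of a nilpotent group in $D^{*}$; indeed, were $Q_{2^{n}}\subseteq H'$ with $n\geq 3$, the induced map $H/C_{H}(Q_{2^{n}})\hookrightarrow\mathrm{Aut}(Q_{2^{n}})$, combined with the fact that commutators of $H$ must generate all of $Q_{2^{n}}$, yields a contradiction with nilpotency of $H$ together with the description of $C_{D}(Q_{2^{n}})$ as a subfield. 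Therefore $H'$ is abelian, $H$ is metabelian, and the local property transfers to $G$.

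The principal obstacle is this final step, ruling out the generalized quaternion Sylow~$2$-subgroup of $H'$. This is the classical assertion that a nilpotent subgroup of $D^{*}$ has abelian commutator subgroup, which can be extracted from standard references on skew linear groups (e.g., \cite{shi-weh_1986}) via careful analysis of the conjugation action in $D^{*}$ and the structure of $\mathrm{Aut}(Q_{2^{n}})$.
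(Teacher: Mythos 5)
Your reduction is correct up to the last step, and it is a more hands-on route than the paper's: metabelianness is the law $[[x_1,x_2],[x_3,x_4]]=1$, hence local; a finitely generated $H\leq G$ is nilpotent and lies in the centrally finite division subring generated by its generators; Proposition~\ref{proposition_4.1} (or directly \cite[Theorem 3.13]{weh_1973}) gives $H/Z(H)$ finite; Schur gives $H'$ finite; and since every finite abelian subgroup of $D^*$ is cyclic (it generates a finite-dimensional commutative domain, i.e.\ a field), the finite nilpotent group $H'$ is a direct product of cyclic Sylow subgroups with at most one generalized quaternion Sylow $2$-subgroup (in characteristic $p>0$ it is already cyclic by Herstein's theorem, so only characteristic $0$ is at issue). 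The paper instead stops at Proposition~\ref{proposition_4.1} and quotes \cite[3.2.2 and 3.2.6]{shi-weh_1986} to conclude $G'$ is abelian.

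The genuine gap is exactly where you locate the ``principal obstacle'': excluding a generalized quaternion Sylow $2$-subgroup of $H'$. The sketch you give does not work as written. First, $C_D(Q_{2^n})$ is \emph{not} in general a subfield: $\mathbb{P}[Q_{2^n}]$ is a quaternion algebra and its centralizer in $D$ is a division subring that may well be noncommutative. Second, ``commutators of $H$ must generate all of $Q_{2^n}$'' is unjustified: commutators generate $H'$, of which $Q_{2^n}$ is only the Sylow $2$-part. Third, no contradiction is actually derived; the paragraph ends by deferring the step to ``standard references,'' which is precisely the content the corollary asks you to supply. The step can, however, be closed by pure group theory, with no division ring needed: $Q=Q_{2^n}$ is characteristic in $H'$, hence normal in $H$; letting $\bar{H}$ be the image of $H$ in $\mathrm{Aut}(Q)$, surjectivity of $H\to\bar{H}$ gives $(\bar{H})'\supseteq\overline{H'}\supseteq\mathrm{Inn}(Q)\cong Q/Z(Q)$, of order $2^{n-1}$. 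For $n=3$, a nilpotent subgroup of $\mathrm{Aut}(Q_8)\cong S_4$ is a $2$-group or a $3$-group (there is no element of order $6$), so its derived subgroup has order at most $2$ and cannot contain $\mathrm{Inn}(Q_8)\cong V_4$. For $n\geq 4$, $\mathrm{Aut}(Q_{2^n})\cong\mathrm{Hol}(C_{2^{n-1}})$ is a $2$-group whose derived subgroup is cyclic of order $2^{n-2}<2^{n-1}$, so again it cannot contain $\mathrm{Inn}(Q)$. Either this argument or the paper's precise citation is needed to make your proof complete.
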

	
	\begin{proof}
		Keeping the same notations $D$ and $G$ as in Proposition \ref{proposition_4.1}, we have $G/Z(G)$ is locally finite. Finally, by applying \cite[3.2.2 and 3.2.6]{shi-weh_1986}, we conclude that  $G'$ is abelian, which means that $G$ is metabelian. 
	\end{proof}
	
	If the conditions ``locally solvable'' and ``solvable'' are equivalent for linear groups, then it is not the case for skew linear groups. For instance, Wehrfritz has constructed a locally finite division ring in which there exist locally solvable subgroups that are not solvable (\cite[1.4.13]{shi-weh_1986}). Whereas, locally nilpotent or locally supersolvable subgroups in a weakly locally finite division ring must be solvable, as we see in the next result.
	
	\begin{proposition}\label{proposition_4.3}
		Every locally supersolvable subgroup in a weakly locally finite division ring is solvable of class at most $3$.
	\end{proposition}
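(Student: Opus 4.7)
The plan is to reduce this to Corollary \ref{corollary_4.2} by showing that the derived subgroup $G'$ of a locally supersolvable subgroup $G \leq D^*$ is itself locally nilpotent, and then invoking the metabelian conclusion already proved for locally nilpotent subgroups of weakly locally finite division rings.

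First I would show that $G'$ is locally nilpotent. Let $H = \langle h_1,\dots,h_r\rangle$ be a finitely generated subgroup of $G'$. Each generator $h_i$ is a finite product of commutators $[a_{ij},b_{ij}]$ with $a_{ij},b_{ij}\in G$, so the subgroup $Y = \langle a_{ij},b_{ij} : i,j\rangle$ is a finitely generated subgroup of $G$ containing $H$ in its derived subgroup $Y'$. Since $G$ is locally supersolvable, $Y$ is supersolvable. By the classical theorem of Zappa, the commutator subgroup of a supersolvable group is nilpotent; hence $Y'$ is nilpotent, and therefore $H \subseteq Y'$ is nilpotent. Since $H$ was an arbitrary finitely generated subgroup of $G'$, this proves that $G'$ is locally nilpotent.

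Next I would apply Corollary \ref{corollary_4.2} to the locally nilpotent subgroup $G'$ of $D^*$, which yields that $G'$ is metabelian, i.e.\ $(G')'' = 1$. Since $(G')'' = G^{(3)}$, it follows that $G$ is solvable of derived length at most $3$, which is exactly the claimed bound.

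I do not expect a serious obstacle in this argument: the local-nilpotency step is essentially bookkeeping together with Zappa's theorem, and the rest is an immediate application of the already-established Corollary \ref{corollary_4.2}. The only point that warrants care is making sure every finitely generated subgroup of $G'$ is actually captured inside the commutator subgroup of some finitely generated $Y\leq G$, which is handled by expanding each $h_i$ as a finite product of commutators and taking $Y$ to be generated by the entries of those commutators.
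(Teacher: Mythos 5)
Your proof is correct, but it takes a genuinely different route from the paper's. The paper first observes that a locally supersolvable $G$ contains no non-cyclic free subgroup, embeds $G$ into a Stewart's division ring via Theorem \ref{theorem_2.9}, and then invokes Stewart's structure theorem for supersolvable skew linear groups to produce a locally nilpotent normal subgroup $A\unlhd G$ with $G/A$ periodic abelian; Corollary \ref{corollary_4.2} then makes $A$ metabelian and gives the series $1\leq A'\leq A\leq G$. You instead bypass both Theorem \ref{theorem_2.9} and Stewart's theorem entirely: by Zappa's classical result that the derived subgroup of a supersolvable group is nilpotent, together with the observation that every finitely generated subgroup of $G'$ sits inside $Y'$ for some finitely generated $Y\leq G$, you get that $G'$ itself is locally nilpotent, and then Corollary \ref{corollary_4.2} applied to $G'$ gives $G^{(3)}=(G')''=1$ directly. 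Your bookkeeping is sound (each generator of a finitely generated $H\leq G'$ is a finite product of commutators of elements of $G$, so $H\leq Y'$ with $Y$ finitely generated and supersolvable, hence $H$ is nilpotent as a subgroup of the nilpotent group $Y'$). The trade-off: your argument is more elementary and self-contained, resting only on standard group theory plus the already-established Corollary \ref{corollary_4.2}, whereas the paper's route illustrates the intended use of Theorem \ref{theorem_2.9} as a reduction to Stewart's division rings and yields slightly more structural information (a locally nilpotent normal subgroup with periodic abelian quotient). Both give derived length at most $3$.
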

	
	\begin{proof} 
		Keeping the same notations as above, by hypothesis, we conclude that $G$ contains no non-cyclic free subgroups. Accordingly, we can view $G$ as a subgroup of a Stewart's division ring according to Theorem \ref{theorem_2.9}. By \cite[Theorem 2.3]{stewart_87}, there exists a locally nilpotent normal subgroup $A$ of $G$ such that $G/A$ is a periodic abelian group. Due to Corollary \ref{corollary_4.2},  $A$ is metabelian, which implies that $G$ is solvable with subnormal series $1\leq A' \leq A \leq G$. 
	\end{proof}
	\section{Maximal subgroups of almost subnormal subgroups}
	
	The existence of non-cyclic free subgroups in a maximal subgroup of  $D^*$ was first studied by M. Mahdavi-Hezavehi in \cite{2001_mah}.
	Let $D$ be a division ring with center $F$, and $G$ a non-central subnormal subgroup of $D^*$. Assume that $M$ is a non-abelian maximal subgroup of $G$. Recently, Hai and Tu \cite{2016_hai-tu} study the problem on the existence of non-cyclic free subgroups in $M$ for the case when $D$ is locally finite. The result obtained in \cite[Theorem 3.4]{2016_hai-tu} generalizes the result previously proved by Mahdavi-Hezavehi in \cite{2001_mah} for the case when $D$ is centrally finite and $G=D^*$. In \cite{2019_hai-khanh}, we study substantially the same problem for the case in which $G$ is assumed to be an almost subnormal subgroup in  a locally finite division ring $D$. In this section, we extend this result for the case when $D$ is weakly locally finite. The essential fact we have successfully proved is that we can reduce the considered  weakly locally finite division ring $D$ to the case of locally finite division ring $D$ provided $M$ is algebraic over the center $F$ of $D$.  
	
	In order to use some Wehrfritz's results, we need the following concept, which is extremely useful in the sequel. Let $R$ be a ring, $S$ a subring of $R$, and $G$ a subgroup of $R^*$ normalizing $S$ such that $R=S[G]$. Assume further that $N=G\cap S$ is a normal subgroup of $G$ and that $R=\bigoplus_{t\in T} tS$, where $T$  is some (and hence any) transversal of $N$ in $G$. Then, the resulting system $(R, S, G, G/N)$ is called a \textit{crossed product}\index{crossed product}. Also, we sometimes say that $R$ is a crossed product of $S$ by $G/N$ or, more briefly, of $S$ over $N$  (see \cite{wehrfritz_91} or \cite[p.23]{shi-weh_1986}). Firstly, we record a few auxiliary lemmas.
	
	\begin{lemma}\label{lemma_crossed}
		Let $R$ be a ring, and $G$ a subgroup of $R^*$. Assume that $F$ is a central subfield of $R$ and that $A$ is a maximal abelian subgroup of $G$ such that $K=F[A]$ is normalized by $G$. Then $F[G]$ is a crossed product of $K$ by $G/A$. In addition, if $R=F[G]$ and $K$ is a field, then it is a maximal subfield of $R$.
	\end{lemma}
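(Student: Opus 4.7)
The plan is to verify the four defining properties of a crossed product for the quadruple $(F[G], K, G, G/A)$, and then to deduce the maximality assertion from the resulting direct-sum decomposition. Three of the four properties are routine bookkeeping: (i) the identity $A = G \cap K$ follows because $A \subseteq G \cap K$ trivially, while any $g \in G \cap K$ centralizes $K$ (hence $A$, since $K$ is commutative), so the maximality of $A$ as an abelian subgroup of $G$ forces $g \in A$; (ii) $A$ is normal in $G$ because $gAg^{-1} \subseteq G \cap gKg^{-1} = G \cap K = A$ for every $g \in G$; (iii) $F[G] = K[G]$ is immediate from $K = F[A] \subseteq F[G]$. Writing each $g \in G$ uniquely as $g = ta$ with $a \in A \subseteq K$ also gives $F[G] = \sum_{t \in T} tK$ for any transversal $T$ of $A$ in $G$.

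The main obstacle is proving the directness of this last sum. My plan is a minimal counterexample argument: assume $\sum_{i=1}^n t_i k_i = 0$ with distinct cosets $t_i A$, nonzero $k_i \in K$, and $n$ minimal. Write $\tau_g(k) = g^{-1} k g$ for the conjugation action of $G$ on $K$. Multiplying the relation on the left by $k \in K$ and on the right by $\tau_{t_n}(k) \in K$ yields $\sum t_i \tau_{t_i}(k) k_i = 0$ and $\sum t_i \tau_{t_n}(k) k_i = 0$ (the second using commutativity of $K$). Subtracting kills the $n$-th term, leaving $\sum_{i < n} t_i (\tau_{t_i}(k) - \tau_{t_n}(k)) k_i = 0$. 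Minimality forces every coefficient to vanish, and since $R$ is a domain with $k_i \neq 0$, we obtain $\tau_{t_i}(k) = \tau_{t_n}(k)$ for all $k \in K$. Hence $t_n t_i^{-1}$ centralizes $K \supseteq A$, and maximality of $A$ forces $t_n t_i^{-1} \in A$, contradicting the distinctness of the cosets $t_i A$ and $t_n A$.

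For the second assertion, suppose $R = F[G]$ and $K$ is a field, and let $L$ be a subfield of $R$ with $K \subseteq L$. For any $x \in L$, I would expand $x = \sum t_i k_i$ in the crossed-product decomposition and impose $xk = kx$ for all $k \in K$. Commutativity of $K$ and the directness already established reduce this to $(k - \tau_{t_i}(k)) k_i = 0$ for all $i$ and $k \in K$; since $R$ is a domain, each nonzero summand satisfies $\tau_{t_i} = \mathrm{id}$ on $K$, which by maximality of $A$ means $t_i \in A$. Taking $1 \in T$ as the representative of the coset $A$, this shows $x \in K$, and hence $L = K$, making $K$ a maximal subfield of $R$.
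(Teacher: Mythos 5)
Your proof is correct and follows essentially the same route as the paper's: both establish directness by taking a minimal-length relation over a transversal and shortening it through conjugation by an element of $K$ together with the commutativity of $K$ (the paper picks a single witness $x\in A$ with $g_1xg_1^{-1}\neq g_2xg_2^{-1}$ and kills the first term, while you quantify over all $k\in K$ and kill the last), and both obtain maximality of $K$ by expanding an element commuting with $K$ in the crossed-product decomposition and forcing its transversal components into $C_G(A)=A$. The one caveat is that your step ``since $R$ is a domain with $k_i\neq 0$'' invokes a hypothesis not in the statement of the lemma; the paper's own proof has the same implicit reliance when it asserts that $(x_2-x_1)k_2\neq 0$, and the gap is harmless in the intended application, where $R$ lives inside a division ring.
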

	
	\begin{proof} 	
		Since $K$ is normalized by $G$, it follows that $F[G]=\sum\nolimits_{g \in T}{Kg}$ for every transversal $T$ of $A$ in $G$. Accordingly, to establish that $(R, S, G, G/A)$ forms a crossed product, it suffices to prove that every finite subset $\{g_1,g_2,\dots,g_n\}$ of $T$ is  linearly independent over $K$. Assume by contrary that there exists  a non-trivial relation 
		$$k_1g_1+k_2g_2+\dots+k_ng_n=0, \text{ where } k_1, k_2, \dots k_n\in K.$$
		Clearly, we can suppose that all the $k_i$'s are non-zero and that $n$ is minimal. The case $n=1$ is obviously trivial, so  suppose that $n>1$. As the cosets $Ag_1$ and $Ag_2$ are disjoint, we know that $g_1^{-1}g_2\not\in A=C_G(A)$. Accordingly, there exists an element $x\in A$ for which $g_1^{-1}g_2x\ne xg_1^{-1}g_2$. For each $1\leq i\leq n$, if we set $x_i=g_ixg_i^{-1}$, then $x_1\ne x_2$. Since $G$ normalizes $K$, it follows that $x_i\in K$ for all $1\leq i\leq n$. Now, we have 
		$$(k_1g_1+\dots+k_ng_n)x-x_1(k_1g_1+\dots+k_ng_n)= 0.$$
		By definition of the $x_i$'s, we deduce that $x_ig_i=g_ix$, and so $x$, $x_i\in K$ for all $i$. By the fact that $K=F[A]$ is commutative, last equality reveals
		$$\left( {{x_2} - {x_1}} \right){k_2}{g_2} +  \dots  + \left( {{x_n} - {x_1}} \right){k_n}{g_n} = 0,$$
		which is a non-trivial relation (since $x_1\ne x_2$) with less than $n$ summands, contrasting with the minimality of $n$. As a result, we obtain the desired fact that $T$ is linearly independent over $K$.
		
		Regarding the last assertion of our lemma,  we assume that $R=F[G]$ and that $K$ is a field. If we set $L=C_R(K)$, then every element $y\in L$ may be written in the form
		$$y=l_1m_1+l_2m_2+\dots+l_tm_t,$$ 
		where $l_1,l_2,\dots,l_t\in K$ and $m_1,m_2,\dots,m_t\in T$. Take an arbitrary element  $a\in A$, by the normality of $A$ in $M$, there exist $a_i\in A$ such that $m_ia=a_im_i$ for all $1\leq i\leq t$. Since $ya=ay$, it follows that
		$$ (l_1a_1-l_1a)m_1+(l_2a_2-l_2a)m_2+\dots+(l_ta_t-l_ta)m_t=0.$$ 
		As $\{m_1,m_2,\dots,m_t\}$ is linearly independent over $K$, the outcome is that $$a=a_1=\dots=a_t.$$
		Consequently, $m_ia=am_i$ for all $a\in A$; thus, $m_i\in C_M(A)=A$ for all $1\leq i\leq t$. The consequence of this fact is that $y\in K$, yielding $L=K$. The last fact ensures the maximality of the field $K$, and the proof is now complete. 
	\end{proof}
	
	\begin{lemma}[{\cite[3.2]{wehrfritz_91}}]\label{lemma_4.2}
		Let $R$ be a ring, $J$ a subring of $R$, and $H\leq K$ subgroups of the unit group $R^*$  normalizing $J$ such that $R$ is the ring of right quotients of $J[H]\leq R$ and $J[K]$ is a crossed product of $J[B]$ by $K/B$ for some normal subgroup $B$ of $K$. Then, $K=HB$.
	\end{lemma}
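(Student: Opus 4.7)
The plan is to fix an arbitrary $k \in K$ and show $k \in HB$. Because $B$ is normal in $K$, the set $HB$ is a subgroup of $K$ containing $B$; since the crossed-product direct-sum decomposition $J[K] = \bigoplus_{t\in T} t\,J[B]$ is independent of the choice of transversal, one may choose $T$ so that $T_{1} := T\cap HB$ is a transversal of $B$ in $HB$. Then $J[HB] = \bigoplus_{t\in T_{1}} t\,J[B]$, and in particular every element of $J[H] \subseteq J[HB]$ has its crossed-product coefficients supported on $T_{1}$. Write $k$ uniquely as $k = t_{k}\beta_{k}$ with $t_{k} \in T$ and $\beta_{k} \in B$; the claim reduces to proving $t_{k} \in T_{1}$.

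By hypothesis $R$ is the right quotient ring of $J[H]$, so the unit $k \in R^{*}$ may be written as $k = ab^{-1}$ with $a, b \in J[H]$ and $b$ regular in $J[H]$; equivalently $kb = a$. Expand $a = \sum_{s\in T_{1}} s\,a_{s}$ and $b = \sum_{s\in T_{1}} s\,b_{s}$ with $a_{s}, b_{s} \in J[B]$. For each $s\in T_{1}$, using that $B$ is normal in $K$, we compute
\[
k\cdot s b_{s} \;=\; t_{k}\beta_{k}\, s\, b_{s} \;=\; (t_{k}s)\,(s^{-1}\beta_{k}s)\,b_{s}.
\]
Writing $t_{k}s = \tau_{s}\gamma_{s}$ with $\tau_{s}\in T$ and $\gamma_{s}\in B$, we obtain $k\cdot s b_{s} = \tau_{s}\bigl(\gamma_{s}(s^{-1}\beta_{k}s)\,b_{s}\bigr)$. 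The assignment $s \mapsto \tau_{s}$ descends to the permutation $sB \mapsto (t_{k}B)(sB)$ of $K/B$ and is therefore a bijection of $T$. Summing over $s$ and matching coefficients in the unique expansion $a = \sum_{t\in T} t\,a_{t}$ yields $a_{\tau_{s}} = \gamma_{s}(s^{-1}\beta_{k}s)\,b_{s}$ for every $s\in T_{1}$.

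Since $b\neq 0$, some $s_{0}\in T_{1}$ satisfies $b_{s_{0}}\neq 0$. The factor $\gamma_{s_{0}}(s_{0}^{-1}\beta_{k}s_{0})$ lies in $B \subseteq R^{*}$, so left multiplication by it is injective on $R$; hence $a_{\tau_{s_{0}}}\neq 0$, and since all nonzero coefficients of $a$ are indexed by $T_{1}$, this forces $\tau_{s_{0}}\in T_{1}\subseteq HB$. Combined with $t_{k}s_{0} = \tau_{s_{0}}\gamma_{s_{0}}$, $s_{0}\in HB$, and $\gamma_{s_{0}}\in B\subseteq HB$, we conclude $t_{k} \in HB$, and therefore $k = t_{k}\beta_{k}\in HB$, as required. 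The main obstacle is organizational rather than conceptual: one must set up a single transversal $T$ that simultaneously witnesses the crossed products for $K/B$ and for $HB/B$, and keep track of the cocycle $s^{-1}\beta_{k}s$ introduced by the fact that individual transversal representatives need not centralize $B$. Once this bookkeeping is in place, the uniqueness of the crossed-product expansion, together with the fact that elements of $B$ are units of $R$, forces the identification $t_{k}\in HB$ and completes the proof.
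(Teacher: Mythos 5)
Your argument is correct. Note that the paper itself gives no proof of this lemma --- it is quoted verbatim from Wehrfritz, \emph{Crossed product criteria and skew linear groups} (J.\ Algebra 141 (1991)), result 3.2 --- so there is no in-paper proof to compare against; but your reasoning (extend a transversal of $B$ in $HB$ to one of $B$ in $K$, write $k=ab^{-1}$ with $a,b\in J[H]$ supported on $T_1$, and use uniqueness of the crossed-product expansion together with the fact that elements of $B$ are units to force the coset of $k$ into $HB$) is sound and is essentially the standard argument for such crossed-product criteria. All the delicate points --- that $T\cap HB$ can be arranged to be a transversal of $B$ in $HB$, that $s\mapsto\tau_s$ is injective, and that $\gamma_{s_0}(s_0^{-1}\beta_k s_0)$ is a unit so $a_{\tau_{s_0}}\neq 0$ --- are handled correctly.
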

	
	The next theorem, which serves as the main result of the present section, may be considered as a necessary condition for a weakly locally finite division ring to be a Stewart's division ring. 
	
	\begin{theorem}\label{theorem_main2}
		Let $D$  be a weakly locally finite division ring with center $F$, and $G$  a non-central almost subnormal subgroup of $D^*$. Assume that $M$ is a non-abelian maximal subgroup of $G$. If $M$ contains no non-cyclic free subgroups, then $D$ is a Stewart's division ring. Moreover,  there exists a maximal subfield $K$ of $D$  containing $F$ such that $K/F$ is a Galois extension, $N_G(K^*)=M $, $K^*\cap G\unlhd M$, and $M/K^*\cap G\cong\mathrm{Gal}(K/F)$ is a locally finite simple group.
	\end{theorem}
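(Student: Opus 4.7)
The plan is to exploit the abelian-by-locally finite structure of $M$ (which holds by Theorem~\ref{theorem_2.8}, since $M$ contains no non-cyclic free subgroup) to construct a subfield $K$ of $D$ realizing $F[M]$ as a crossed product of $K$ by $M/A$ via Lemma~\ref{lemma_crossed}, and then to leverage the maximality of $M$ in $G$ together with a Cartan-Brauer-Hua-type argument to identify $F[M]$ with $D$, so that $D$ inherits the Stewart structure (locally finite over $K$). The Galois description of $M/(K^*\cap G)$ will then follow by analysing the conjugation action of $M$ on $K$.

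The first step is to invoke Theorem~\ref{theorem_2.8} to obtain an abelian normal subgroup $A_0\unlhd M$ with $M/A_0$ locally finite, and then by Zorn's Lemma to enlarge $A_0$ to an abelian subgroup $A\leq M$ that is maximal among those for which $K:=F[A]$ is a subfield of $D$ normalized by $M$; the quotient $M/A$ is still locally finite because $A\supseteq A_0$. Lemma~\ref{lemma_crossed} then yields that $F[M]$ is a crossed product of $K$ by $M/A$. Next I would verify that $F[M]$ is itself a division ring: any $x\in F[M]$ can be written using finitely many transversal representatives $t_1,\ldots,t_n$ of $A$ in $M$, and the local finiteness of $M/A$ makes $\langle A,t_1,\ldots,t_n\rangle/A$ finite, so the $K$-span $K[t_1,\ldots,t_n]$ is a finite-dimensional $K$-domain, hence a division ring containing $x^{-1}$. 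Consequently $F[M]$ is a division ring locally finite over $K$, that is, a Stewart's division ring.

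The crux is then to upgrade to the equality $F[M]=D$. I would examine $N_G(F[M])$, which contains $M$; by the maximality of $M$ in $G$ it equals either $M$ or $G$. In the case $N_G(F[M])=G$, the subgroup $G$ normalizes the division subring $F[M]$ of $D$, and since $G$ is non-central and almost subnormal in $D^*$, a Cartan-Brauer-Hua argument (either via a direct almost-subnormal version, or indirectly via the normal closure $G^{D^*}$, which is subnormal in $D^*$ and still normalizes $F[M]$) forces $F[M]\in\{F,D\}$, and non-abelianity of $M$ excludes $F[M]=F$. The delicate case $N_G(F[M])=M$ is what I expect to be the main obstacle; here one should combine Lemma~\ref{lemma_4.2} with the maximality of $M$ to derive a contradiction with the existence (guaranteed by Corollary~\ref{corollary_2.6}) of non-cyclic free subgroups in $G$, or to conclude $F[M]=D$ directly. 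Ensuring this Cartan-Brauer-Hua upgrade works for almost subnormal (rather than merely subnormal) subgroups, while simultaneously controlling the non-normal case $N_G(F[M])=M$, is where the real technical work lives.

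Once $D=F[M]$ has been established, the second part of Lemma~\ref{lemma_crossed} makes $K$ a maximal subfield of $D$, whence $C_D(K)=K$. Setting $H:=K^*\cap G=K^*\cap M$, the conjugation action of $M$ on $K$ has kernel $M\cap C_D(K)=H$, yielding an embedding $M/H\hookrightarrow\mathrm{Aut}(K/F)$. To verify $N_G(K^*)=M$, any $g\in N_G(K^*)$ would normalize $K$ and hence $K[\langle M,g\rangle]$; by maximality of $M$ in $G$ either $g\in M$ or $\langle M,g\rangle=G$, and the latter leads via the crossed-product picture to a contradiction with $F[M]=D$ already being the full ring. Surjectivity of $M/H\to\mathrm{Gal}(K/F)$ and the Galois property of $K/F$ follow from a Skolem-Noether-style lifting of each $F$-automorphism of $K$ to conjugation by an element of $D^*$ (using that every finitely generated subring of $D$ is centrally finite) together with the maximal choice of $A$. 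Finally, $M/H$ is locally finite as a quotient of $M/A$, and any proper normal subgroup $H<N\unlhd M$ would correspond, via the Galois duality, to an intermediate $M$-stable subfield $F\subsetneq K^N\subsetneq K$, contradicting the maximality of $A$; hence $M/H$ is simple, completing the structural description.
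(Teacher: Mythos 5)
Your overall architecture matches the paper's: extract an abelian normal $A$ with $M/A$ locally finite from Theorem~\ref{theorem_2.8}, build $K=F(A)$ and the crossed product via Lemma~\ref{lemma_crossed}, force $K[M]=D$ using the maximality of $M$, and read off the Galois/simplicity statements from the conjugation action. But the three places you either leave open or argue incorrectly are exactly where the paper's proof lives. First, the identity $K[M]=D$, which you flag as ``the main obstacle,'' is closed in the paper by a clean dichotomy: since $M\le E^*\cap G\le G$ with $E=K[M]$, maximality gives $E^*\cap G=M$ or $G\le E^*$; in the first case $M$ becomes a non-central almost subnormal subgroup of $E^*$ ($E$ being weakly locally finite), so Corollary~\ref{corollary_2.6} hands $M$ a non-cyclic free subgroup --- a contradiction with the hypothesis on $M$, not with free subgroups in $G$ as you suggest; in the second case $E=D$ follows from the almost-subnormal Cartan--Brauer--Hua theorem of \cite{2019_khanh-hai}. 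Your proposed contradiction ``with the existence of non-cyclic free subgroups in $G$'' does not work, since $G$ is permitted to contain such subgroups.

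Second, your Zorn's-lemma choice of $A$ does not yield $A=C_M(A)$, which is what both the hypothesis of Lemma~\ref{lemma_crossed} (``maximal abelian'') and the conclusion that $K$ is a maximal subfield actually require; asserting $C_D(K)=K$ from ``the second part of Lemma~\ref{lemma_crossed}'' is circular. The paper obtains this by a second dichotomy on $N_G(L^*)$ for $L=C_D(K)$: the branch $N_G(L^*)=G$ forces $K=F$ and reduces to the locally finite case (settled by citing \cite[Theorem 3.1]{2019_hai-khanh} --- a branch your proposal omits entirely), while the branch $N_G(L^*)=M$ makes $L^*\cap G$ almost subnormal in $L^*$ with no free subgroups, hence central in $L$ by Corollary~\ref{corollary_2.6}, whence $A=L^*\cap G=C_M(A)$. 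Third, your simplicity argument is not valid: a proper normal subgroup $N$ of $M$ above $K^*\cap G$ gives an intermediate $M$-stable field $K^{\psi(N)}$, but nothing about the maximality of $A$ forbids intermediate fields between $F$ and $K$. The paper instead shows $K[N]=D$ (again via the $N_G(K[N]^*)$ dichotomy and Corollary~\ref{corollary_2.6}) and then applies Lemma~\ref{lemma_4.2} to get $M=AN=N$. Finally, surjectivity of $M\to\mathrm{Gal}(K/F)$ should come from the fixed field of $\psi(M)$ being $F$ (because $C_D(M)=F$ once $K[M]=D$), not from a Skolem--Noether lift, whose conjugating element has no reason to lie in $M$.
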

	
	\begin{proof} 
		Assume that $M$ contains no non-cyclic free subgroups. With reference to Theorem \ref{theorem_2.8}, we can conclude that $M$ contains a maximal subgroup $A$ with respect to the property: $A$ is an abelian normal subgroup of $M$ such that $M/A$ is locally finite. Then, the subfield $K=F(A)$ of $D$ is normalized by $M$. A similar argument used in the proof of Theorem \ref{theorem_2.9} reveals that $E:=K[M]$ is a division ring, which is locally finite over $K$. Since $M$ is maximal in $G$, either $E^*\cap G=M$ or $G\le E^*$. In the first case, one would have that $M$ is a non-central almost subnormal subgroup of $E^*$ containing no non-cyclic free subgroups, contradicting  Corollary \ref{corollary_2.6}. This contradiction forces the second case to occur and so $E=D$ by \cite[Corollary 2.3]{2019_khanh-hai}. It follows that $D$ is a $K$-Stewart's division ring.
		
		Let $L=C_D(K)$. Observe that $L$ is a division subring of $D$ normalized by $M$; hence $M$ is contained in the normalizer $N_{D^*}(L^*)$ of $L^*$ in $D^*$. Let us denote  $N_G(L^*)=G\cap N_{D^*}(L^*)$. The maximality of $M$ implies that $M\le N_G(L^*)\le G$,  so either  $G=N_G(L^*)$ or $M= N_G(L^*)$. 
		
		\bigskip
		
		\textit{Case 1. $G=N_G(L^*)$:}
		
		\bigskip
		
		Since $G$ is almost subnormal in $D^*$, it follows that $L^*\cap G$ is almost subnormal in $D^*$ too. The condition $G=N_G(L^*)$  yields that $G\le N_{D^*}(L^*)$, so $L^*\cap G$ normalizes $L$. In view of \cite[Corollary 2.3]{2019_khanh-hai}, either $L^*\cap G\subseteq F$ or $L=D$. It is clear that in both these possibilities,  we always have $A\subseteq F$ which implies that $K=F$. It follows that $D$ is a locally finite division ring,  and the conclusion of the theorem follows from \cite[Theorem~ 3.1]{2019_hai-khanh}. 
		\bigskip
		
		\textit{Case 2. $M= N_G(L^*)$:}
		
		\bigskip
		
		The condition  $M= N_G(L^*)$ implies that $L^*\cap G \le M$. It follows that $L^*\cap G$ is an almost subnormal subgroup of $L^*$ containing no non-cyclic free subgroups. It is obvious that $L$ is also weakly locally finite. In view of Corollary~ \ref{corollary_2.6}, we have $L^*\cap G\subseteq Z(L)$ which implies that  $L^*\cap G$ is an abelian normal subgroup of $M$.  Since $M/A$ is locally finite, it follows that $M/L^*\cap G$ is locally finite too. By the maximality of $A$ in $M$, one has $A=L^*\cap G$. Because we are in the case $L^*\cap G \le M$, it follows that $L^*\cap G=L^*\cap M$. Consequently,  $$A=L^*\cap M=C_D(A)\cap M =C_M(A),$$ which shows that $A$ is a maximal abelian subgroup of $M$. In virtue of Lemma \ref{lemma_crossed}, we conclude that $K$ is  a maximal subfield of $D$  and that $D$ is a crossed product of $K$ by $M/A$.
		
		Our next step is to show that $K/F$ is a Galois extension. Recall that $A$ is normal in $M$, so for any $a\in M$,  the mapping $\theta_a:K\to K$ given by $\theta_a(x)=axa^{-1}$ is well defined. It is a fairly simple matter to see that $\theta_a$ is an $F$-automorphism of $K$. Accordingly, the mapping $$\psi:M\to \mathrm{Gal}(K/F)$$ defined by $\psi(a)=\theta_a$ is a group homomorphism with $$\mathrm{ker}\psi=C_M(K^*)=C_D(K^*)\cap M=K^*\cap G.$$ Since $K[M]=D$ and $A$ is contained in $M$, it follows that $C_D(M)=F$. Therefore, the fixed field of $\psi(M)$ is $F$. From this, we conclude that $\psi$ is a surjective homomorphism, and $K/F$ is a Galois extension, and so $M/K^*\cap G\cong\mathrm{Gal}(K/F)$, which is a locally finite group.
		
		Now, we prove that $M/A$ is simple. For this purpose, suppose that $N$ is an arbitrary normal subgroup of $M$ properly containing $A$. Note that by the maximality of $A$ in $M$, we may assume further that $N$ is non-abelian. We assert that $R:=K[N]=D$. As $N$ is normal in $M$, we have $M\le N_G(R^*)\le G$ and so either $N_G(R^*)=M$ or $N_G(R^*)=G$. If the first case occurs, then $R^*\cap G\le M$ and, consequently, $R^*\cap G$ is a normal subgroup of $R^*$ containing no non-cyclic free subgroups. By the same argument used in the proof of Theorem \ref{theorem_2.9}, we conclude that $R$ is a weakly locally finite division ring. It follows from Corollary \ref{corollary_2.6} that $R^*\cap G$ is abelian, which contradicts  the choice of $N$. Thus, the case  $N_G(R^*)=G$ must occur, from which it follows that $R=D$, as asserted. In view of Lemma \ref{lemma_4.2}, we deduce that $M=AN=N$ because $A\le N$. This implies that $M/A$ is a simple group. Finally, the fact $A=K^*\cap G$ shows that $M/K^*\cap G$ is a locally finite simple group. 
	\end{proof}
	
	In the above theorem, if we assume further that $M$ is algebraic over $F$, then we might obtain the following stronger result. 
	
	\begin{corollary}
		Let $D$  be a weakly locally finite division ring with center $F$, and $G$ an almost subnormal subgroup of $D^*$. Assume that $M$ is a non-abelian maximal subgroup of $G$ and that $M$ is algebraic over $F$. If $M$ contains no non-cyclic free subgroups, then $[D:F]<\infty$, $F[M]=D$, and there exists a maximal subfield $K$ of $D$  containing $F$ such that $K/F$ is a Galois extension, $N_G(K^*)=M $, $K^*\cap G\unlhd M$, $M/K^*\cap G\cong\mathrm{Gal}(K/F)$ is a finite simple group, and $K^*\cap G$ is the Fitting subgroup of $M$.
	\end{corollary}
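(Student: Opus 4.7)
The plan is to exploit the added algebraicity hypothesis to upgrade $D$ from a Stewart's division ring (as supplied by Theorem~\ref{theorem_main2}) to a locally finite division ring, thereby reducing the corollary to the locally finite case already settled in \cite[Theorem~3.1]{2019_hai-khanh}.

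First I would apply Theorem~\ref{theorem_main2} to obtain a maximal subfield $K$ of $D$ containing $F$, with $K/F$ Galois, $N_G(K^*)=M$, $K^*\cap G\unlhd M$, and $M/(K^*\cap G)\cong\mathrm{Gal}(K/F)$ a locally finite simple group. Next I would use the extra hypothesis that $M$ is algebraic over $F$: since $M$ also contains no non-cyclic free subgroup, Corollary~\ref{corollary_algebraic} (applied to the subgroup $M\le D^*$) gives that the division subring $F(M)$ of $D$ is locally finite. To transfer this local finiteness to $D$ itself, I would invoke the maximality of $M$ in $G$: either $F(M)^*\cap G=M$, in which case $M$ would be a non-central almost subnormal subgroup of the (weakly) locally finite division ring $F(M)$ without non-cyclic free subgroups, contradicting Corollary~\ref{corollary_2.6}; or $F(M)^*\cap G=G$, so $G\le F(M)^*$, and since $F(M)$ is then a division subring of $D$ normalised by the non-central almost subnormal subgroup $G$, \cite[Corollary~2.3]{2019_khanh-hai} forces $F(M)=D$. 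Either way, $D$ itself is locally finite.

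With $D$ now locally finite, I would apply \cite[Theorem~3.1]{2019_hai-khanh} directly to $G$ and its non-abelian maximal subgroup $M$; its conclusions deliver $[D:F]<\infty$, $F[M]=D$, and the assertion that $K^*\cap G$ is the Fitting subgroup of $M$. The finiteness $[D:F]<\infty$ together with the maximality of $K$ then makes $[K:F]=|\mathrm{Gal}(K/F)|$ finite, so $M/(K^*\cap G)\cong\mathrm{Gal}(K/F)$ is the asserted finite simple group. The only genuine work beyond Theorem~\ref{theorem_main2} and the two cited results is the identification $F(M)=D$, which is exactly the device converting algebraicity of $M$ into local finiteness of the ambient ring $D$; I expect this small reduction to be the main, and essentially unique, obstacle.
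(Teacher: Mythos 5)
Your proposal is correct and follows essentially the same route as the paper: Theorem~\ref{theorem_main2}, then Corollary~\ref{corollary_algebraic} to upgrade $D$ to a locally finite division ring, then \cite[Theorem~3.1]{2019_hai-khanh}. The only difference is that you re-derive $F(M)=D$ via the maximality dichotomy and \cite[Corollary~2.3]{2019_khanh-hai}, whereas the paper reads this identity directly off the proof of Theorem~\ref{theorem_main2} (where $K[M]=D$ is established); your extra argument is sound but not needed.
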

	\begin{proof} It follows by Theorem \ref{theorem_main2} that $D$ is a Stewart's division ring and $F(M)=D$. Therefore, we may apply Corollary \ref{corollary_algebraic} to conclude that $D$ is a locally finite division ring. Hence, the conclusions  follow from \cite[Theorem 3.1]{2019_hai-khanh}. 
	\end{proof}

	\textbf{Acknowledgements.} The authors would like to express the sincere gratitude to the referee.

\end{document}